\documentclass[12pt]{amsart}
\usepackage[margin=1in]{geometry}
\usepackage{amsmath,amsfonts,amsthm,amssymb,bbm}
\usepackage{graphicx,color,dsfont}
\usepackage{enumitem}
\usepackage{fourier}

\newtheorem{theorem}{Theorem}
\newtheorem{lemma}{Lemma}
\newtheorem{proposition}{Proposition}

\newtheorem{corollary}{Corollary}

\newtheorem{claim}{Claim}

 \theoremstyle{definition}
 
 \theoremstyle{remark}

 \numberwithin{equation}{section}

\newcommand{\vertiii}[1]{{\left\vert\kern-0.25ex\left\vert\kern-0.25ex\left\vert #1
    \right\vert\kern-0.25ex\right\vert\kern-0.25ex\right\vert}}

\newcommand{\W}{{\mathcal W}}

\newcommand{\f}[2]{\frac{#1}{#2}}

\newcommand{\cl}{{\mathcal L}}



\newcommand{\al}{\alpha}
\newcommand{\be}{\beta}

\newcommand{\ga}{\gamma}

\newcommand{\de}{\delta}

\newcommand{\la}{\lambda}

\newcommand{\si}{\sigma}

\newcommand{\vp}{\varphi}
\newcommand{\vpt}{\tilde{\vp}}


\newcommand{\rone}{\mathbf R}

\newcommand{\tcl}{\tilde{\cl}}

\newcommand{\dpr}[2]{\langle #1,#2 \rangle}



\newcommand{\cm}{\mathcal M}


\newcommand{\p}{\partial}


\newcommand{\beq}{\begin{equation}}
\newcommand{\eeq}{\end{equation}}
\newcommand{\beqna}{\begin{eqnarray*}}
\newcommand{\eeqna}{\end{eqnarray*}}
\newcommand{\beqn}{\begin{equation*}}
\newcommand{\eeqn}{\end{equation*}}
\newcommand{\bp}{\begin{proof}}
\newcommand{\ep}{\end{proof}}
\newcommand{\bprop}{\begin{proposition}}
\newcommand{\eprop}{\end{proposition}}
\newcommand{\bt}{\begin{theorem}}
\newcommand{\et}{\end{theorem}}
\newcommand{\bex}{\begin{Example}}
\newcommand{\eex}{\end{Example}}
\newcommand{\bc}{\begin{corollary}}
\newcommand{\ec}{\end{corollary}}
\newcommand{\bcl}{\begin{claim}}
\newcommand{\ecl}{\end{claim}}
\newcommand{\bl}{\begin{lemma}}
\newcommand{\el}{\end{lemma}}

\newcommand{\cj}{{\mathcal J}}

\begin{document}

\title[Stable frequency combs in   periodic waveguides]
 {On the generation of  stable Kerr   frequency combs in the  Lugiato-Lefever model of periodic optical waveguides}

\thanks{Sevdzhan Hakkaev partially supported by Scientific Grant RD-08-119/2018 of Shumen University.
 Stanislavova is partially supported by  NSF-DMS under grant \# 1516245.   Stefanov    is partially  supported by  NSF-DMS under grant \# 1614734.}

\author[S. Hakkaev]{\sc Sevdzhan Hakkaev}
\address{ Department of Mathematics and Computer Science, Istanbul Aydin University, Istanbul, Turkey}

\email{sevdzhanhakkaev@aydin.edu.tr}

\address{Faculty of Mathematics and Informatics, Shumen University, Shumen, Bulgaria}

\author[M.  Stanislavova]{\sc Milena Stanislavova}
\address{ Department of Mathematics,
University of Kansas,
1460 Jayhawk Boulevard,  Lawrence KS 66045--7523, USA}
\email{stanis@ku.edu}

\author[A. Stefanov]{\sc Atanas Stefanov}
\address{ Department of Mathematics,
University of Kansas,
1460 Jayhawk Boulevard,  Lawrence KS 66045--7523, USA}
\email{stefanov@ku.edu}

\subjclass{Primary 35Q55, 35P10; Secondary 42B37, 42B35}

\keywords{Lugiato-Lefever,  Kerr frequency combs, periodic waveguides, stability}

\date{\today}

\begin{abstract}

We consider the Lugiato-Lefever (LL) model of optical fibers. We construct a two parameter family of steady state solutions, i.e. Kerr frequency combs, for small pumping parameter $h>0$ and the correspondingly (and necessarily) small  detuning parameter, $\al >0$. These are $O(1)$ waves, as they are constructed as   bifurcation  from the standard cnoidal  solutions of the  cubic NLS.  We identify the  spectrally  stable ones, and more precisely,  we show that  the spectrum of the linearized operator contains the eigenvalues $0, -2\al$, while the rest of it is  a subset of $ \{\mu: \Re\mu=-\al \}$.  This is in line with the expectations for effectively damped Hamiltonian systems, such as the LL model.

\end{abstract}

\maketitle

\section{Introduction}
High frequency optical combs generated by  micro resonators  is an active area of research, \cite{CY, KHD, LL1, Matsko,MR}. These were experimentally observed, \cite{Holz} and then further studied in the context of concrete micro resonators.
The relevant envelope models were derived from the Maxwell's equation, \cite{LL1} to describe the mechanism of pattern formation in the optical field of a cavity filled with Kerr medium, which is then subjected to a radiation field.  There are numerous papers dealing with the model derivation, as well as further reductions to dimensionless variables, see for example \cite{CM},\cite{LL1}, \cite{MR} among others. The model equation is the following 
\begin{equation}
\label{mar} 
\psi_t+i \be \psi_{xx}+(\ga+i \de) \psi - i |\psi|^2 \psi=F.
\end{equation}
This is the model considered in \cite{DH, DH1} as well as \cite{Ts1, Ts2}. 
We follow slightly different format, which is more popular in the physics literature.  The derivation in \cite{Matsko}, see also \cite{Mat1} in the regime of the so-called whispering gallery mode resonators.   In it, a ``master'' equation is derived.  After non-dimensionalization of the variables, one is  looking at the model\footnote{Clearly the two models are equivalent after multiplication by $i$ and setting some constants to $1$}  
\begin{equation}
\label{1.1}
  iu_t+u_{xx}-u+2|u|^2u=-i\alpha u-h, t\geq 0, -T\leq x\leq T
\end{equation}
where $u$ is the field envelope (complex-valued) function, $t$ is the normalized time, $x$ is the azimuthal coordinate, while $\al>0$ is  the detuning/damping  parameter  and the normalized  pumping  strength parameter is $h>0$.  We are interested in time independent solutions,
{\it 	that is frequency/Kerr combs   $u(t,x)=\vp(x)$ and their stability}, as solutions of the full time dependent problem \eqref{1.1}.  These satisfy the time-independent equation
\begin{equation}
\label{1.15}
-\vp''+\vp - 2|\vp|^2\vp=i\alpha \vp+h ,  -T\leq x\leq T
\end{equation}
A few words about the range of the parameters. Physically, it is advantageous  that the pumping parameter $h$ be small. In fact, the case $h=0$ is used by many authors as a bifurcation point to construct such waves, assuming that one starts with a ``good''  solution at $h=0$. On the other hand, in a recent paper \cite{MR}, the authors have closely  studied the relationship $\al, h$, which supports the existence of Kerr combs. The case $\al=0$ offers another useful starting point for bifurcation analysis, even when $h\neq 0$. This point of view was explored in \cite{Matsko}, by using the results of \cite{BS}, in the related context of the forced NLS model. In fact, one can write explicit solutions of \eqref{1.1} on the whole line (i.e. $T+\infty$), when $\al=0$, \cite{BS}.  Similar construction was carried out in the periodic case in  \cite{QDM}, since one can write an explicit solution in the case $\al=0$ in terms of Jacobi elliptic functions.

In the periodic setting, there are numerous recent developments as to the existence (and subsequently stability) of periodic in $x$ solitary waves, which are closed to constants. In \cite{QDM, MOT, DH, DH1}, the authors have studied stationary solutions of \eqref{1.15}, close to constants.More specifically,  In \cite{DH, DH1}, the authors have studied ``close to constant'' periodic solutions, by looking at bifurcations close to points of Turing instabilities.  In \cite{MOT} the authors have considered the question for asymptotic stability of ``close to  constant''  solutions , given their spectral stability.

Our goal in this paper is to explore the existence and the stability properties of the solutions of \eqref{1.15}, in the physically relevant regime $0<h<<1$.   Mathematically (and also from a physical perspective), it turns out that $\al$ is also necessarily a  small parameter, in fact $\al\sim h$. In addition, we are looking for (large) solutions close to the standard cnoildal solutions for NLS, with $h=\al=0$, as these are well-known in both the theoretical  context and also easily physically realizable. Most importantly, we are interested in such solutions that are dynamically stable as solutions of \eqref{1.1}. We achieve all of these goals, by first constructing a family of such solutions, as long as the necessary conditions on $\al$, to be established below, are met. Next, we provide an explicit characterizations of their spectral stability, in fact, we provide a fairly explicit description of spectrum of the linearized operator, which should be useful in further studies of its semigroup properties. We postpone these considerations for a  future publication.

\subsection{Construction of the solutions} It will be important to understand the behavior of the solutions of \eqref{1.1} in the case when one of the parameters is zero. This is interesting in itself, but it will also give us important clues as to what is important (and reasonable  to expect) in the physically interesting  case $0<\al, h<<1 $.
Let us first discuss an impossibility result that we alluded to above.
\subsubsection{The case $h=0$, $\al\neq 0$: no  steady state  solutions}
\begin{proposition}($h=0$ does not support stationary solutions)
	\label{obs:10}
The equation
	\begin{equation}
	\label{110}
	\vp''-\vp+2|\vp|^2 \vp +i \al \vp=0,
	\end{equation}
	\underline{does not have}  non-trivial classical solutions $\vp_\al$.
\end{proposition}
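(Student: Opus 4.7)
The proof is essentially a one-line energy estimate exploiting the dissipative character of the $i\al\vp$ term. The plan is to test the equation against $\bar{\vp}$ and extract the imaginary part, which isolates the damping.

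First, I would multiply \eqref{110} by $\bar{\vp}$ and integrate over the period $[-T,T]$ (using periodic boundary conditions, which is the natural setting inherited from \eqref{1.1}). Integrating the $\vp''\bar{\vp}$ term by parts, the boundary terms vanish by periodicity, giving
\begin{equation*}
-\int_{-T}^T |\vp'|^2\,dx -\int_{-T}^T |\vp|^2\,dx + 2\int_{-T}^T |\vp|^4\,dx + i\al \int_{-T}^T |\vp|^2\,dx = 0.
\end{equation*}
Since the first three integrals are real, taking imaginary parts yields $\al \|\vp\|_{L^2}^2 = 0$. Because $\al\neq 0$ by hypothesis, $\vp\equiv 0$, contradicting the non-triviality assumption.

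There is no real obstacle: the calculation is completely mechanical. The only point worth flagging is the choice of boundary conditions. For periodic boundary data on $[-T,T]$ the integration by parts is boundary-free, and the same argument works on the whole line provided $\vp$ has enough decay (e.g.\ $\vp \in H^1(\mathbb R)$) to justify the vanishing of boundary terms. Classical solutions in either regime therefore fall within the scope of the computation. The conceptual content of the statement is exactly what the calculation exposes: $i\al\vp$ acts as pure damping on the $L^2$ mass, so in the absence of a forcing term $h$ no nontrivial equilibrium can persist.
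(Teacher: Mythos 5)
Your proof is correct, and it takes a genuinely different (and more elementary) route than the paper. The paper decomposes $\vp_\al=\vp_{\al,1}+i\vp_{\al,2}$, observes that the real system has the antisymmetric form $L[\vp_{\al,1}]=-\al\vp_{\al,2}$, $L[\vp_{\al,2}]=\al\vp_{\al,1}$ for the self-adjoint operator $L=-\p_x^2+1-2(\vp_{\al,1}^2+\vp_{\al,2}^2)$, iterates to get $L^2[\vp_{\al,1}]=-\al^2\vp_{\al,1}$, and then derives the contradiction $0\le\|L\vp_{\al,1}\|^2=-\al^2\|\vp_{\al,1}\|^2<0$. You instead pair the complex equation with $\bar{\vp}$ and take imaginary parts, which isolates the damping term and gives $\al\|\vp\|^2=0$ directly; this is exactly the mechanism the paper itself deploys later in Proposition \ref{prop:90} to show $\al=O(h)$ when $h\neq 0$, so your argument unifies the two statements (the forced case yields $\al\|\vp\|^2=h\int\vp_2$, and your case is simply $h=0$). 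Both proofs rest on the same integration by parts with periodic boundary conditions (the paper needs it for the self-adjointness of $L$, you need it for $\int\vp''\bar\vp=-\int|\vp'|^2$), so neither is more general in that respect; what yours buys is brevity and a transparent physical reading ($i\al\vp$ is pure loss of $L^2$ mass), while the paper's version exhibits the spectral obstruction ($-\al^2<0$ cannot be an eigenvalue of the nonnegative operator $L^2$), which is more in the spirit of the operator-theoretic machinery developed in the rest of the paper. One small housekeeping point: your final step should note, as the paper's does implicitly, that $\vp\equiv 0$ is precisely the excluded trivial solution, so the contradiction is with non-triviality rather than with existence per se.
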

\begin{proof}
	Let $\varphi_{\alpha}=\varphi_{\alpha, 1}+i\varphi_{\alpha, 2}$ be  a solution of \eqref{110}. Then, we have
	\begin{equation}
	\label{120}
	\left| \begin{array}{ll} \varphi_{\alpha,1}^{''}-\varphi_{\alpha,1}+2(\varphi_{\alpha,1}^2+\varphi_{\alpha, 2}^2)\varphi_{\alpha, 1}-\alpha\varphi_{\alpha, 2}=0 \\
	\\
	\varphi_{\alpha,2}^{''}-\varphi_{\alpha,2}+2(\varphi_{\alpha,1}^2+\varphi_{\alpha, 2}^2)\varphi_{\alpha, 2}+\alpha\varphi_{\alpha, 1}=0.
	\end{array} \right.
	\end{equation}
	Denoting the second order self-adjoint differential operator $L:=-\p_x^2+1 - 2(\varphi_{\alpha,1}^2+\varphi_{\alpha, 2}^2)$, we see that \eqref{120} is a relationship in the form
	$$
	\left| \begin{array}{l}
	L[\vp_{\al,1}]=-\al \vp_{\al,2} \\
	L[\vp_{\al,2}]=\al \vp_{\al,1}.
	\end{array}
	\right.
	$$
	Thus, applying $L$ to to first equation, we obtain $L^2[\vp_{\al,1}]=-\al^2 \vp_{\al,1}$. By taking a dot product with $\vp_{\al,1}$, we obtain
	$$
	0\leq \|L \vp_{\al,1}\|^2=\dpr{L^2[\vp_{\al,1}]}{\vp_{\al,1}}=-\al^2\|\vp_{\al,1}\|^2<0,
	$$
	which  is a contradiction.
\end{proof}

\subsubsection{The case $h>0$, $\al=0$: description of a one parameter steady state  solutions}
In the other endpoint case, that is $\al=0$, one looks for spatially periodic, time-independent  solutions of \eqref{1.1}, $u=\vp(x)$. For  $\varphi (x)$, we have the equation
\begin{equation}\label{1.2}
  \varphi''-\varphi+2\varphi^3=-h, -T\leq x\leq T
\end{equation}
It turns out that this problem has good explicit cnoidal solutions, which we now describe.
 We integrate  once the equation \eqref{1.2}, to  get
 \begin{equation}\label{1.3}
 \varphi'^2=-\varphi^4+\varphi^2-2h\varphi-c,
 \end{equation}
 where $c$ is a constant of integration. Recall that our interest is in the regime $0<h<<1$.
 {\it We demand that  $\zeta_1<\zeta_2<\zeta_3<\zeta_4$ are four real roots of the polynomial $z^4-z^2+2hz+c$}. Then, we  rewrite the equation \eqref{1.3} in the form
 \begin{equation}\label{1.4}
 \varphi'^2=(\zeta_4-\varphi )(\varphi-\zeta_1)(\varphi-\zeta_2)(\varphi-\zeta_3).
 \end{equation}
 The solution of   (\ref{1.4}) is given by
 \begin{equation}\label{1.5}
 \varphi(x)=\frac{\zeta_4(\zeta_3-\zeta_1)+\zeta_1(\zeta_4-\zeta_3)sn^2(\frac{x}{\sqrt{g}}, \kappa)}{(\zeta_3-\zeta_1)+(\zeta_4-\zeta_3)sn^2(\frac{x}{\sqrt{g}}, \kappa)},
 \end{equation}
where $\kappa^2=\frac{(\zeta_4-\zeta_3)(\zeta_2-\zeta_1)}{(\zeta_4-\zeta_2)(\zeta_3-\zeta_1)}$, $g=\frac{2}{\sqrt{(\zeta_4-\zeta_2)(\zeta_3-\zeta_1)}}$, and
\begin{equation}\label{1.6}
\left| \begin{array}{ll}
\zeta_1+\zeta_2+\zeta_3+\zeta_4=0\\
\zeta_1\zeta_2+\zeta_1\zeta_3+\zeta_1\zeta_4+\zeta_2\zeta_3+\zeta_2\zeta_4+\zeta_3\zeta_4=-1\\
\zeta_1\zeta_2\zeta_3+\zeta_1\zeta_2\zeta_4+\zeta_2\zeta_3\zeta_4+
\zeta_1\zeta_3\zeta_4=-2h\\
\zeta_1\zeta_2\zeta_3\zeta_4=c.
\end{array} \right.
\end{equation}
  These are   the solutions that we shall be interested in. These solutions have been found  in the Lugiato-Lefever context in \cite{BS} and \cite{Matsko}, see also \cite{Mat1}. In the whole line case, the explicit formulas appear for the first time in \cite{BS}.

The current construction gives us a parametrization in terms of $c,h$. We now comment on the range of $c$, for which the condition that   the polynomial $z^4-z^2+2hz+c$ has four different and real roots. At least for $h=0$, this is easy to characterize. Namely, the quartic has four real roots exactly when  $c\in (0,\f{1}{4})$. Then, for $0<h<<1$, we clearly must require that
  $c\in (0, \f{1}{4})$ within an error of $O(h)$.

  For future purposes however, it will be beneficial to  parametrize the waves in term of a
  different set of parameters $m,h$, where $m=\min_{-T\leq x\leq T} \vp(x)$. In fact, $m$ is exactly the root $\vp_3$ above, since the explicit solution $\vp$ varies in the interval $[\vp_3, \vp_4]$, hence $c=-m^4+m^2-2hm$.

  We proceed as follows - set $\vp=m+\psi$ in \eqref{1.3}, whence we require that $\psi\geq 0$ and  we obtain the following equation for $\psi$
  \begin{equation}
  \label{1.33}
  (\psi')^2=\psi[-\psi^3-4m\psi^2+(1-6m^2) \psi+(2m-4m^3-2h)].
  \end{equation}
  In order for such $\psi$ to exist, we clearly need $(2m-4m^3-2h)>0$, that is $m\in (0, \f{1}{\sqrt{2}})$ within $O(h)$. Note that this is consistent with the relations $c\in (0,\f{1}{4})$ and $c=-m^4+m^2$, within $O(h)$. In addition, the polynomial $z\to -z^3-4m z^2+(1-6m^2) z+(2m-4m^3-2h)$ has a positive root -   denote the smallest positive root by $\psi_1$.
  In this case there is unique solution to the equation
  \begin{equation}
  \label{15}
  \psi'=-\sqrt{\psi[-\psi^3-4m\psi^2+(1-6m^2) \psi+(2m-4m^3-2h)]}, -T\leq x\leq T,
  \end{equation}
  which satisfies the following
  \begin{itemize}
  	\item $\psi$ is even, decaying in $[0,T]$ (and  so $\psi'(0)=0$),
  	\item $\psi(0)=\psi_1$, $\psi(T)=0$.
  \end{itemize}
  Now, it is much easier to parametrize the roots $\zeta_1, \ldots, \zeta_4$, which will be useful in the sequel. Take again $h=0$, then the final result will be within $O(h)$. We have the equation
  $$
  z^4-z^2=-c=m^4-m^2.
  $$
  This has solutions $z_{1,2}=\pm m, z_{3,4}=\pm \sqrt{1-m^2}$. By the restriction, $m\in (0, \f{1}{\sqrt{2}})$, we have that $\sqrt{1-m^2}>m$, whence we arrive at the following formulas
  \begin{equation}
  \label{20}
  \zeta_1=-\sqrt{1-m^2}+O(h),\  \zeta_2=-m+O(h),\  \zeta_3=m,\  \zeta_4=\sqrt{1-m^2}+O(h).
  \end{equation}

  These are the solutions $\vp=\vp_h$, defined on $[-T,T]$. One can easily recover the standard cnoidal solution $\vp_0$ -  simply put  $h=0$ in the formula \eqref{20}, which is then subsequently used in \eqref{1.5}.
  \subsection{Main results}
  Our first result is about the instability of $\vp=\vp_h$, as solutions for \eqref{1.1}.
  \begin{proposition}
  	\label{prop:10}
  	Let $\al=0$. Then the waves \eqref{1.5} are spectrally unstable solutions of \eqref{1.1}, with a single real eigenvalue,  for all sufficiently small values of $h: 0<h<<1$.
  \end{proposition}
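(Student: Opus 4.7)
The plan is to linearize around $\vp_h$ and then extract the instability count from a Hamiltonian-Krein Morse-index formula. Writing $u(t,x)=\vp_h(x)+e^{\mu t}(v(x)+iw(x))$ with real $v,w$ and substituting into \eqref{1.1} at $\al=0$, one obtains the spectral problem
\begin{equation*}
\cl_h\begin{pmatrix}v\\ w\end{pmatrix}=\mu\begin{pmatrix}v\\ w\end{pmatrix},\qquad \cl_h:=\begin{pmatrix}0 & L_-^h\\ -L_+^h & 0\end{pmatrix},
\end{equation*}
where $L_+^h:=-\p_x^2+1-6\vp_h^2$ and $L_-^h:=-\p_x^2+1-2\vp_h^2$ act self-adjointly on $\lt[-T,T]$ with periodic boundary conditions. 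In standard Hamiltonian form $\cl_h=JL^h$ with $L^h:=\textup{diag}(L_+^h,L_-^h)$ and $J$ the $2\times 2$ symplectic matrix.

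The central step is the Morse index count $n(L^h)=n(L_+^h)+n(L_-^h)$ for $0<h\ll 1$. For $L_-^h$, rearranging the profile equation \eqref{1.2} gives $L_-^h\vp_h=h$. Let $\psi_h>0$ denote the ground state of $L_-^h$ with eigenvalue $\mu_0(h)$. Pairing with $\psi_h$ and using self-adjointness yields
\begin{equation*}
\mu_0(h)\,\dpr{\vp_h}{\psi_h}=h\,\dpr{1}{\psi_h}>0,
\end{equation*}
so $\mu_0(h)>0$ and $n(L_-^h)=0$ for all small $h>0$. For $L_+^h$, differentiating \eqref{1.2} in $x$ gives $L_+^h\vp_h'=0$. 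Since $\vp_h'$ has exactly two zeros per period, located at the unique maximum and minimum of $\vp_h$, Sturm-Liouville oscillation theory for Hill operators identifies $0$ as the second periodic eigenvalue of $L_+^h$, so $n(L_+^h)=1$. Altogether $n(L^h)=1$.

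The Hamiltonian-Krein instability index theorem (Kapitula-Kevrekidis-Sandstede) then asserts
\begin{equation*}
k_r+2k_c+2k_i^-=n(L^h)-n(D),
\end{equation*}
where $k_r$, $k_c$, $k_i^-$ count respectively the positive real eigenvalues, the complex quadruples, and the purely imaginary eigenvalues of negative Krein signature of $\cl_h$, and $D$ is the symmetric form encoding the non-trivial generalized kernel. For $h>0$ the phase symmetry is destroyed, so $\ker\cl_h=\textup{span}\{(\vp_h',0)\}$; the associated Jordan chain has length exactly two, generated by $\xi=(0,(L_-^h)^{-1}\vp_h')$ (well-defined since $L_-^h>0$), and its single diagonal entry is $\dpr{(L_-^h)^{-1}\vp_h'}{\vp_h'}>0$, so $n(D)=0$. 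The identity $k_r+2k_c+2k_i^-=1$ then forces $k_r=1$ and $k_c=k_i^-=0$, that is, exactly one positive real eigenvalue and no other eigenvalues off the imaginary axis.

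The main obstacle is the Sturm-Liouville identification $n(L_+^0)=1$ for the Lam\'e-type Hill operator $-\p_x^2+1-6\vp_0^2$; this rests on the qualitative geometry of the dnoidal profile \eqref{1.5} (single maximum and single minimum per period) plus a careful counting of zeros of $\vp_0'$, followed by continuity of the Morse index in $h$ to extend the count to $0<h\ll 1$. A subsidiary point is that the Jordan chain at $\mu=0$ has length exactly two, which reduces to the non-vanishing of $\dpr{(L_-^h)^{-1}\vp_h'}{\vp_h'}$ already established by the strict positivity of $L_-^h$; this both controls $D$ and ensures the hypothesis of the index formula is met.
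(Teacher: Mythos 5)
Your overall architecture coincides with the paper's: linearize to obtain $\cj\cl_h$ with $\cl_h=\mathrm{diag}(\cl_{+,h},\cl_{-,h})$, establish $n(\cl_h)=1$ and $n(D)=0$, and invoke the Kapitula--Kevrekidis--Sandstede count $k_r+2k_c+2k_i^-=n(\cl_h)-n(D)=1$ to force exactly one real unstable eigenvalue. Your treatment of $\cl_{-,h}$ is in fact a cleaner shortcut than the paper's: pairing $\cl_{-,h}\vp_h=h$ with the positive ground state gives $\la_0(\cl_{-,h})>0$ directly (using only $\vp_h>0$), whereas the paper derives the first-order asymptotics $\la_0(\cl_{-,h})=h\,\dpr{1}{\vp_0}/\|\vp_0\|^2+O(h^2)$ in Lemma \ref{le:10} --- an expansion it needs later for other purposes. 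Your computation of $n(D)=0$ from the positivity of $\cl_{-,h}^{-1}$ is identical to the paper's.

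The one genuine gap is the step $n(\cl_{+,h})=1$. Knowing that $\cl_{+,h}[\vp_h']=0$ and that $\vp_h'$ has exactly two zeros per period only places $0$ in the pair $\{\la_1(\cl_{+,h}),\la_2(\cl_{+,h})\}$ by the oscillation theory for periodic Hill operators; it does not exclude the scenario $\la_1<\la_2=0$, which would give $n(\cl_{+,h})=2$ and destroy the index count, nor the degenerate scenario $\la_1=\la_2=0$, which would add an even eigenfunction to the kernel and invalidate your Jordan-chain computation of $D$. The paper closes both holes at $h=0$ by a variational argument --- $\vp_0$ is (a rescaling of) a constrained minimizer, so $\cl_{+,0}|_{\{\vp_0\}^\perp}\geq 0$ while $\dpr{\cl_{+,0}\vp_0}{\vp_0}<0$, pinning $n(\cl_{+,0})=1$ exactly --- together with a Rofe--Beketov second-solution computation (deferred to the appendix, see \eqref{posle} and \eqref{ant4}) showing that the zero eigenvalue is simple; only then does perturbation in $h$ transfer these facts to $0<h\ll 1$. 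You would need to supply one of these arguments, or an explicit Lam\'e-operator spectral computation, to make the Morse-index step rigorous.
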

  We now comment on the necessary conditions on $\al$ that   one needs to impose, so that the profile equation \eqref{1.15} supports $O(1)$ solutions. We have the following
  \begin{proposition}
  	\label{prop:90}
  	Let $0<h<<1$. Assume that \eqref{1.15} has a solution $\vp$, so that $\vp=O(1)$. Then, $\al=O(h)$.
  \end{proposition}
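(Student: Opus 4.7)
The plan is to extract an identity that couples $\al$ to $h$ and to the $L^2$ size of the solution. Write $\vp = \vp_1 + i \vp_2$ with $\vp_1, \vp_2$ real, and split \eqref{1.15} into its real and imaginary parts. Using $i\al\vp = -\al\vp_2 + i\al\vp_1$, this gives the system
\begin{equation*}
\left|\begin{array}{l}
-\vp_1'' + \vp_1 - 2(\vp_1^2+\vp_2^2)\vp_1 = -\al \vp_2 + h, \\
-\vp_2'' + \vp_2 - 2(\vp_1^2+\vp_2^2)\vp_2 = \al \vp_1.
\end{array}\right.
\end{equation*}

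Next I would multiply the first equation by $\vp_2$, the second by $\vp_1$, integrate over $[-T,T]$ and subtract. The two nonlinear terms $2|\vp|^2\vp_1\vp_2$ cancel, as do the two linear terms $\vp_1\vp_2$. The remaining derivative terms produce
\begin{equation*}
\int_{-T}^{T}\bigl(-\vp_1''\vp_2 + \vp_2''\vp_1\bigr)\,dx,
\end{equation*}
which, by periodic boundary conditions (these solutions are periodic, like the cnoidal background they bifurcate from), integrates by parts to $0$. What remains is the key identity
\begin{equation*}
\al \,\|\vp\|_{L^2[-T,T]}^2 \;=\; h\int_{-T}^{T}\vp_2\,dx.
\end{equation*}

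From here the conclusion is immediate. By Cauchy--Schwarz, $|\int \vp_2|\le \sqrt{2T}\,\|\vp_2\|_{L^2}\le \sqrt{2T}\,\|\vp\|_{L^2}$, so
\begin{equation*}
|\al|\;\le\; \frac{h\sqrt{2T}}{\|\vp\|_{L^2}}.
\end{equation*}
The hypothesis $\vp=O(1)$ means $\|\vp\|_{L^2}$ is bounded below by a positive constant independent of $h$, so $\al = O(h)$.

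There is no real technical obstacle; the only point requiring a touch of care is ensuring that the integration-by-parts boundary contributions vanish, which is why the periodic setting is essential, and noting that the nonlinear and linear-in-$\vp$ terms cancel exactly in the subtraction. The mild subtlety on the other end is the precise interpretation of ``$\vp=O(1)$'': one needs $\|\vp\|_{L^2}$ to be bounded below uniformly in $h$, which is automatic if one assumes (as is natural here) that $\vp$ is close to an $O(1)$ cnoidal profile of the $h=\al=0$ problem.
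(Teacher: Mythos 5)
Your argument is correct and is essentially the paper's own proof: pairing the profile equation with $\vp$ in $L^2$ and taking imaginary parts (which is what your multiply-and-subtract computation amounts to) yields exactly the identity $\al\|\vp\|^2 = h\int_{-T}^{T}\vp_2\,dx$, followed by Cauchy--Schwarz. Your explicit remarks about the vanishing boundary terms and the need for a lower bound on $\|\vp\|_{L^2}$ are the right (implicit) reading of the paper's one-line conclusion.
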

  \begin{proof}
  Take a dot product with $\vp=\vp_1+i \vp_2$ in \eqref{1.15}. Then, since its left-hand side is real, taking imaginary parts results in
  $$
  \al \|\vp\|^2 = h \int_{-T}^T  \vp_2(x) dx\leq h \sqrt{2 T} \|\vp\|
  $$
  This implies that $\al=O(h)$.
  \end{proof}

  Now that we know that $\al=O(h)$, we take the ansatz $\al=\al_0 h$.  Our next result describes the existence of waves for $0<h<<1$, $\al=\al_0  h$,  which only holds for  specific range of $\al_0$.  In order to state the result, we introduce the operators $\cl_\pm$,
  \begin{eqnarray*}
  	\cl_{+,h} &=& -\partial_x^2+1-6\varphi_h^2\\
  	\cl_{-,h} &=& -\partial_x^2+1-2\varphi_h^2
  \end{eqnarray*}
  which will be important for our arguments in the sequel. Also, $L_\pm:=\cl_{\pm,0}$.

  \begin{theorem}
  	\label{102}
  	Let $\al_0$ is such that $ 0<\al_0<\f{\dpr{1}{\vp_0}}{\|\vp_0\|^2}$. Then, there exists $h_0=h_0(\al_0)>0$, so that for every $h: 0<h<h_0$ and $\al:=\al_0 h$, there exists a stationary solution $\vp_{\al}=\vp_{\al,1}+i \vp_{\al,2}$ of
  	\eqref{1.1}  or equivalently  solutions $\vp_{\al,1}(h), \vp_{\al,2}(h)$ of the profile system   \eqref{150}. Moreover, there is the following Taylor expansions formula for the coefficients
  	\begin{eqnarray}
  	\label{500}
  	\vp_{\al,1} &=& (a_0+\f{b_0}{2} h D_2^0+O(h^2))  \vp_0+h \Psi_1^0 +O_{\{\vp_0\}^\perp}(h^2)\\
  	\label{510}
  	\vp_{\al,2} &=&  (b_0-\f{a_0}{2} h D_2^0+O(h^2))  \vp_0+h \Psi_2^0 +O_{\{\vp_0\}^\perp}(h^2)
  	\end{eqnarray}
  	where
  	\begin{eqnarray*}
  		& & a_0=\si_0  \f{\|\vp_0\|^2}{\dpr{1}{\vp_0}}, \ \ \ b_0= \al_0  \f{\|\vp_0\|^2} {\dpr{1}{\vp_0}},
  		\si_0=  \pm \sqrt{ \f{\dpr{1}{\vp_0}^2}{\|\vp_0\|^4}-\al_0^2}; \\
  			& & D_2^0 = 8 \f{\dpr{\vp_0^2 L_+^{-1}[1]}{L_-^{-1}[b_0-\al_0 \vp_0]}}{\dpr{1}{\vp_0}}; \\
  		& &  \Psi_1^0=  a_0^2 L_+^{-1}[1]  + b_0 L_-^{-1}[b_0-\al_0\vp_0];  \\
  		& & \Psi_2^0=  a_0 b_0  L_+^{-1}[1]  - a_0 L_-^{-1}[b_0-\al_0\vp_0].
  	\end{eqnarray*}
  \end{theorem}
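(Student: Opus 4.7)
The plan is a Lyapunov--Schmidt reduction centered at the real cnoidal profile $\vp_0$, absorbing the $U(1)$ invariance of the unperturbed ($h=\al=0$) equation into an explicit phase that will be pinned down by the leading-order solvability condition.

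First, I would rotate by an unknown phase $\theta_0$: set $\vp_\al=e^{\im\theta_0}\tilde u$ and $a_0:=\cos\theta_0,\ b_0:=\sin\theta_0$, under which \eqref{1.15} becomes
\beq
\lbl{p1}
-\tilde u''+\tilde u-2|\tilde u|^2\tilde u=\im\al\tilde u+(a_0-\im b_0)h.
\eeq
Writing $\tilde u=\vp_0+\tilde v_1+\im\tilde v_2$ with $\tilde v_j=O(h)$ and restricting throughout to even functions (killing the translation direction $\vp_0'\in\ker L_+$), the linearization at $\vp_0$ is block-diagonal: $L_+$ acts on $\tilde v_1$, $L_-$ on $\tilde v_2$. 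At order $h$ the system reads
\beq
\lbl{p2}
L_+\tilde v_1=a_0h+O(h^2),\qquad L_-\tilde v_2=(\al_0\vp_0-b_0)h+O(h^2),
\eeq
with $\al=\al_0h$.

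Second, the Fredholm alternative against the only persistent cokernel direction $\vp_0\in\ker L_-$ forces the scalar bifurcation condition $\dpr{\al_0\vp_0-b_0}{\vp_0}=0$, i.e.\ $b_0=\al_0\|\vp_0\|^2/\dpr{1}{\vp_0}$. Combined with $a_0^2+b_0^2=1$ (implicit in the parametrization $a_0=\cos\theta_0$, $b_0=\sin\theta_0$) this gives $a_0=\si_0\|\vp_0\|^2/\dpr{1}{\vp_0}$ with $\si_0=\pm\sqrt{\dpr{1}{\vp_0}^2/\|\vp_0\|^4-\al_0^2}$; reality of $\si_0$ is exactly the hypothesis $\al_0<\dpr{1}{\vp_0}/\|\vp_0\|^2$. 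Inverting $L_+$ on $\{\vp_0'\}^\perp$ and $L_-$ on $\{\vp_0\}^\perp$ in the even space then yields $\tilde v_1=a_0hL_+^{-1}[1]+O(h^2)$ and $\tilde v_2=-hL_-^{-1}[b_0-\al_0\vp_0]+O(h^2)$; undoing the rotation via $\vp_{\al,1}=a_0\tilde u_1-b_0\tilde u_2$, $\vp_{\al,2}=b_0\tilde u_1+a_0\tilde u_2$ reproduces precisely the stated $\Psi_1^0,\Psi_2^0$.

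Third, I would close the argument by the implicit function theorem applied to $(\tilde v_1,\tilde v_2,\theta_0)$, with $\theta_0$ adjoined as an extra unknown determined by one scalar equation (the $L_-$ solvability relation). The derivative of that scalar equation in $\theta_0$ at the leading point equals $\si_0\dpr{1}{\vp_0}\ne 0$ by hypothesis, so the reduced linearization is an isomorphism and IFT produces smooth branches $\tilde v_j(h),\theta_0(h)$. The correction $\tfrac12hD_2^0$ to $\theta_0$ comes from pushing the bifurcation equation one order further: substituting the leading $\tilde v_j$ into the next-order scalar orthogonality yields a cubic cross-term $-4\vp_0\tilde v_1\tilde v_2$ from the expansion of $-2|\tilde u|^2\tilde u_2$, and solving for the $O(h)$ correction to $\theta_0$ reproduces $D_2^0=8\dpr{\vp_0^2L_+^{-1}[1]}{L_-^{-1}[b_0-\al_0\vp_0]}/\dpr{1}{\vp_0}$.

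The main obstacle is the $U(1)$ degeneracy: the phase $\theta_0$ must be determined at the same order in $h$ as the first profile correction, so the bifurcation equation and the profile equation must be solved simultaneously rather than perturbatively. The hypothesis $\al_0<\dpr{1}{\vp_0}/\|\vp_0\|^2$ is precisely the fold-point condition keeping the two sign choices of $\si_0$ separated and the reduced Jacobian nondegenerate; its failure would correspond to the merger of the two solution branches.
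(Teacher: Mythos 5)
Your proposal is correct and is essentially the paper's own argument written in rotated coordinates: the paper's unknown $q=(1+i)\vp_{\al}=(c+dh)\vp_0+h\Psi$ with $c=(1+i)(a_0+ib_0)$ is exactly your $e^{i\theta_0}(\vp_0+\tilde v_1+i\tilde v_2)$ up to the harmless factor $1+i$, its diagonalized system \eqref{315}--\eqref{317} is your $(L_+,L_-)$ pair, and its unknowns $D_1,D_2$ are your amplitude and phase corrections, with $D_1^0=0$ automatic in your gauge and with the identity $\dpr{L_+^{-1}[1]}{\vp_0}=0$ (Proposition \ref{prop:h0}) used tacitly both for $\Psi_1^0\perp\vp_0$ and for killing the extra term $\al_0\dpr{\tilde v_1}{\vp_0}$ in your second-order computation of $D_2^0$. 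The one quantitative slip is the Jacobian of the scalar bifurcation equation in $\theta_0$, which equals $-a_0\dpr{1}{\vp_0}=-\si_0\|\vp_0\|^2$ rather than $\si_0\dpr{1}{\vp_0}$; it is nonzero under exactly the same hypothesis $\si_0\ne 0$, so the implicit function argument closes as you describe.
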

  {\bf Remarks:}
  \begin{enumerate}
  	\item Note that by Proposition \ref{prop:h0} below, $Ker[L_-]=span[\vp_0], Ker[L_+]=span[\vp_0']$. Therefore,  the expression  $L_-^{-1}[b_0-\al_0\vp_0]$ is well-defined, since by the definition of $b_0$, we have that $b_0-\al_0\vp_0\perp Ker[L_-]$. Similarly, $1\perp Ker[L_+]$, whence $L_+^{-1}[1]$ is well-defined.
  	\item The theorem applies under the more general ansatz $\al=\al_0 h+O(h^2)$. In fact, since its statement is a first order in $h$, the proof in this more general case, goes without any changes or modifications.
  \end{enumerate}

  We also have the following theorem regarding the stability of these solutions.
  \begin{theorem}
  	\label{105}
  	Let  $h, \al_0, \vp_{\al}(h)$ be as in Theorem \ref{102}. Then, $\vp_{\al}(h)$ is stable \underline{if and only if}
  	$$
  \si_0= - \sqrt{ \f{\dpr{1}{\vp_0}^2}{\|\vp_0\|^4}-\al_0^2}.
  	$$
  	In addition, in the stable case, the spectrum of the full linearized operator $\cj \cl_h$ has two real eigenvalues $0$ and $-2\al$, and the rest of the spectrum is on the vertical line $\{\mu: \Re\mu=-\al \}$.  That is, it
  	admits the description
  	$$
  	\si(\cj \cl_h)\subset  \{0\} \cup\{-2\al\}\cup  \{\mu: \Re\mu=-\al \}.
  	$$
  	In the unstable case, which occurs for $$  \si_0=  \sqrt{ \f{\dpr{1}{\vp_0}^2}{\|\vp_0\|^4}-\al_0^2},$$  there is a single real
  	unstable  eigenvalue in the form $\mu_0 \sqrt{h}+O(h)$, where
  	$$
  	\mu_0=\sqrt{\f{\si_0 \|\vp_0\|^2}{-\dpr{L_{+}^{-1}\vp_0}{\vp_0}}}>0.
  	$$
  		
  \end{theorem}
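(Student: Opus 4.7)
The linearization of \eqref{1.1} at $\vp_\al=\vp_{\al,1}+i\vp_{\al,2}$, written in real form on $(v_1,v_2)^T$ with $v=v_1+iv_2$, puts the linear generator into the form $\cj\cl_h = -\al I + \cj\wt\cl_h$, with $\cj = \begin{pmatrix} 0 & 1 \\ -1 & 0\end{pmatrix}$ and $\wt\cl_h$ a self-adjoint $2\times 2$ matrix Schr\"odinger operator whose matrix potential is built from $|\vp_\al|^2$ and the cross term $\vp_{\al,1}\vp_{\al,2}$. The key structural property is that $\cj\wt\cl_h$ is of Hamiltonian type, $\cj(\cj\wt\cl_h) = -(\cj\wt\cl_h)^*\cj$, so $\si(\cj\wt\cl_h)$ is symmetric under $\mu\mapsto -\bar\mu$ and consequently $\si(\cj\cl_h)$ is symmetric about the vertical line $\{\Re\mu=-\al\}$.

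From this we can locate two of the announced eigenvalues immediately. Differentiating \eqref{1.15} in $x$ gives $\cj\cl_h\,(\vp_{\al,1}',\vp_{\al,2}')^T=0$, so $0\in\si(\cj\cl_h)$, equivalently $\al\in\si(\cj\wt\cl_h)$. By the Hamiltonian symmetry, $-\al\in\si(\cj\wt\cl_h)$, and after the $-\al$ shift this produces $-2\al\in\si(\cj\cl_h)$.

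For the spectrum near $0$ I plan a Lyapunov--Schmidt reduction. The orthogonal rotation $R(\theta_0)$ with $\cos\theta_0=a_0$, $\sin\theta_0=b_0$ (valid since $a_0^2+b_0^2=1$) conjugates the $h\to 0^+$ limit of $\wt\cl_h$ into $\begin{pmatrix} L_+ & 0 \\ 0 & L_-\end{pmatrix}$, so after this harmless change of frame the unperturbed operator is the classical NLS linearization $\cj\cl_0=\begin{pmatrix}0 & L_- \\ -L_+ & 0\end{pmatrix}$. Using the kernel characterization from the Remarks after Theorem \ref{102}, $\ker L_-=\mathrm{span}[\vp_0]$ and $\ker L_+=\mathrm{span}[\vp_0']$, the generalized kernel of $\cj\cl_0$ is four-dimensional, spanned by the eigenvectors $\psi_1=(\vp_0',0)$, $\psi_2=(0,\vp_0)$ and the Jordan partners $\psi_3=(0,L_-^{-1}\vp_0')$, $\psi_4=(-L_+^{-1}\vp_0,0)$, split into a translation block and a phase block. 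The Riesz projection $P_h$ onto the part of $\si(\cj\cl_h)$ near $0$ has rank four for small $h$, and one reduces the spectral problem to diagonalizing the induced $4\times 4$ matrix of $\cj\cl_h|_{\mathrm{Ran}(P_h)}$. The translation block is already pinned to $\{0,-2\al\}$ by the two preceding paragraphs and requires no further computation. The phase block is a perturbation of the single Jordan block $\begin{pmatrix}0 & 1\\0 & 0\end{pmatrix}$; expanding its matrix elements via \eqref{500}--\eqref{510} and using $L_-\vp_0=0$, $L_+\vp_0'=0$ and the explicit forms of $a_0,b_0,\Psi_1^0,\Psi_2^0$, a somewhat involved inner-product calculation shows that the two phase-block eigenvalues of $\cj\wt\cl_h$ take the form $\pm\mu_0\sqrt{h}+O(h)$ with $\mu_0^2 = -\si_0\,\|\vp_0\|^2/\dpr{L_+^{-1}\vp_0}{\vp_0}$. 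Because $\dpr{L_+^{-1}\vp_0}{\vp_0}<0$ (the Vakhitov--Kolokolov sign on the cnoidal range), $\mu_0^2>0$ precisely when $\si_0>0$; in that unstable case $\cj\cl_h$ inherits the real eigenvalue $\mu_0\sqrt{h}+O(h)$ as stated, whereas $\si_0<0$ yields $\mu_0^2<0$ and hence a purely imaginary pair in $\cj\wt\cl_h$, which after the $-\al$ shift sits on $\{\Re\mu=-\al\}$.

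For $\si(\cj\cl_h)$ away from $0$ I would apply standard analytic perturbation theory: the rest of $\si(\cj\cl_0)$ is discrete (the problem is posed on the compact interval $[-T,T]$ with periodic boundary conditions), consists of isolated simple imaginary eigenvalues, and on the cnoidal range considered carries a definite Krein signature, so each persists as an imaginary eigenvalue of $\cj\wt\cl_h$ under the $O(h)$ Hamiltonian perturbation; after the $-\al$ shift these lie on $\{\Re\mu=-\al\}$. The principal technical difficulty is the phase-block computation that produces the sign and leading coefficient of $\mu_0^2$, where the relevant inner-product cancellations have to be tracked carefully against the orthogonality relations built into $\Psi_1^0,\Psi_2^0$. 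A second essential ingredient is the Krein-signature input for the nonzero spectrum of $\cj\cl_0$, without which eigenvalues could in principle migrate off the imaginary axis under the Hamiltonian perturbation.
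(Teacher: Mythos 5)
Your reduction near the origin is sound and matches the paper's: the shift $\cj\cl_h=-\al I+\cj\wt\cl_h$, the identification of $0$ and $-2\al$ from translation invariance plus the Hamiltonian symmetry of the shifted problem, the rotation by $(a_0,b_0)$ conjugating the leading order to $\mathrm{diag}(L_+,L_-)$, the four-dimensional generalized kernel, and the leading-order formula $\mu_0^2=-\si_0\|\vp_0\|^2/\dpr{L_+^{-1}\vp_0}{\vp_0}$ all agree with the paper's computation (equations \eqref{b:90} and \eqref{610}). The genuine gap is in your last step, the spectrum away from zero. You propose to perturb each nonzero imaginary eigenvalue of $\cj\cl_0$ individually, invoking simplicity and definite Krein signature. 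Neither property is established (simplicity of all nonzero eigenvalues of $\cj\,\mathrm{diag}(L_+,L_-)$ on the periodic domain is not proved anywhere, and accidental degeneracies are possible), and even granting both, an eigenvalue-by-eigenvalue persistence argument is not uniform in $h$: there are infinitely many eigenvalues accumulating at infinity, and each gets its own threshold $h_j$, so you cannot conclude that for a single small $h$ the \emph{entire} remaining spectrum sits on $\{\Re\mu=-\al\}$. This matters precisely for the ``only if'' direction and for the claimed global description of $\si(\cj\cl_h)$.

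The paper closes this globally rather than locally, and that is the machinery your proposal is missing. One computes the Morse index of the perturbed self-adjoint operator, $n(\cl_h)=3$ (two negative eigenvalues on the even subspace, because for $a_0<0$ the modulational eigenvalue of $\cl_{-,0}$ moves left by \eqref{b:90}, plus one on the odd subspace), and separately verifies that the bifurcating pair $\pm i|\mu_0|\sqrt h+O(h)$ has \emph{negative} Krein signature (the quadratic form computation after \eqref{598}). The Hamiltonian--Krein count $k_r+2k_c+2k_i^-\le n(\cl_h)=3$ is then saturated by the real eigenvalue $\mu=\al$ (odd, $k_r=1$) and the negative-signature pair (even, $k_i^-=1$), forcing $k_c=0$ and no further real or off-axis spectrum anywhere -- in one stroke and uniformly in the whole spectrum. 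Your write-up contains neither the computation of $n(\cl_h)$ nor the Krein signature of the new pair, so the counting argument cannot be run; as written, the stable case of the theorem is not established.
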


\section{Preliminaries}

 We  now give  the  basic spectral properties of the linearized operators associated with $\vp_h$.

 \subsection{Spectral properties}

 Before listing these properties,  let us state them in the easier case $h=0$, of which we bifurcate as $h\neq 0$.
 \begin{proposition}
 \label{prop:h0}
 The solution $\vp_0$, (which can be written explicitly by taking $h=0$ in \eqref{20}) satisfies
 \begin{itemize}
 \item $\cl_{-,0}\geq 0$, with $\cl_{-,0}[\vp_0]=0$, $\cl_{-,0}|_{\{\vp_0\}^\perp}>0$
 \item $n(\cl_{+,0})=1$, $\cl_{+,0}[\vp'_0]=0$, $0$ is a simple eigenvalue for $\cl_{+,0}$.
 \end{itemize}
 In addition,   the following two relations hold
$$
\dpr{\cl_{+,0}^{-1} \vp_0}{\vp_0}<0, \ \ \dpr{\cl_{+,0}^{-1}[1]}{\vp_0}=0
$$
 \end{proposition}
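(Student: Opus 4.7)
The plan is to handle the spectral structure of $\cl_{\pm,0}$ first, then the two integral identities, with the second identity driven by an explicit closed-form inverse and the inequality reduced to a Vakhitov--Kolokolov-type monotonicity.

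For the kernels, the relations drop out of the profile equation: differentiating $-\vp_0''+\vp_0-2\vp_0^3=0$ in $x$ gives $\cl_{+,0}[\vp_0']=0$, and rewriting the profile equation as $(-\p_x^2+1-2\vp_0^2)\vp_0=0$ gives $\cl_{-,0}[\vp_0]=0$. For positivity of $\cl_{-,0}$, I would invoke the Perron--Frobenius property for one-dimensional periodic Schr\"odinger operators: the lowest periodic eigenvalue is simple and its eigenspace is spanned by a strictly positive function. Since $\vp_0>0$ sits in $\ker\cl_{-,0}$, it must be this ground state, so $\vp_0$ spans $\ker\cl_{-,0}$, giving $\cl_{-,0}\geq 0$ and $\cl_{-,0}|_{\vp_0^\perp}>0$. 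For the Morse index $n(\cl_{+,0})=1$, I would count nodes: $\vp_0'$ has exactly two zeros on the torus (the unique interior minimum and maximum of $\vp_0$), so by Floquet/Sturm oscillation theory it sits at the second periodic eigenvalue; hence $\cl_{+,0}$ has a unique negative periodic eigenvalue, whose existence is also confirmed by $\dpr{\cl_{+,0}\vp_0}{\vp_0}=-4\int\vp_0^4<0$. Simplicity of the zero eigenvalue is then a parity statement: in the odd sector the kernel is exactly spanned by $\vp_0'$, and the even sector has trivial kernel at this energy for the Lam\'e-type potential $1-6\vp_0^2$.

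The identity $\dpr{\cl_{+,0}^{-1}[1]}{\vp_0}=0$ reduces to an explicit computation of $\cl_{+,0}^{-1}[1]$. Using the profile equation together with its first integral $(\vp_0')^2=-\vp_0^4+\vp_0^2-c$, a direct substitution verifies
\begin{equation*}
\cl_{+,0}[\vp_0^2]=-3\vp_0^2+2c,\qquad \cl_{+,0}[1]=1-6\vp_0^2,
\end{equation*}
and eliminating $\vp_0^2$ between these gives $\cl_{+,0}[1-2\vp_0^2]=1-4c$, i.e.
\begin{equation*}
\cl_{+,0}^{-1}[1]=\frac{1-2\vp_0^2}{1-4c},
\end{equation*}
which is legitimate because $c=m^2(1-m^2)\in(0,1/4)$ strictly for $m\in(0,1/\sqrt{2})$. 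Integrating $-\vp_0''+\vp_0-2\vp_0^3=0$ over one period (with $\int\vp_0''=0$ by periodicity) yields $\int\vp_0=2\int\vp_0^3$; pairing the closed form with $\vp_0$ then produces
\begin{equation*}
\dpr{\cl_{+,0}^{-1}[1]}{\vp_0}=\frac{1}{1-4c}\Bigl(\int\vp_0-2\int\vp_0^3\Bigr)=0.
\end{equation*}

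The inequality $\dpr{\cl_{+,0}^{-1}\vp_0}{\vp_0}<0$ is the more substantial part and is Vakhitov--Kolokolov in nature. I would introduce the one-parameter family $\vp_\om$ of even cnoidal solutions of $-\vp''+\om\vp-2\vp^3=0$ of period $2T$, normalized so that $\vp_1=\vp_0$. Differentiating the equation in $\om$ and evaluating at $\om=1$ gives $\cl_{+,0}[\partial_\om\vp_\om|_{\om=1}]=-\vp_0$, hence
\begin{equation*}
\dpr{\cl_{+,0}^{-1}\vp_0}{\vp_0}=-\tfrac12\,\partial_\om\|\vp_\om\|_{L^2}^2\big|_{\om=1},
\end{equation*}
so the desired inequality is equivalent to the strict monotonicity $\partial_\om\|\vp_\om\|^2|_{\om=1}>0$ of the mass along the cnoidal branch. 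Verifying this monotonicity is the main obstacle; the natural route is to express $\|\vp_\om\|^2$ in terms of complete elliptic integrals of the roots $\zeta_i(\om)$ and differentiate, or alternatively to check the sign in the small-amplitude limit $m\uparrow 1/\sqrt 2$ (where $\vp_\om$ is a trigonometric perturbation of a constant and the computation is elementary) and to rule out sign changes across $m\in(0,1/\sqrt 2)$ by continuity together with the non-degeneracy of the cnoidal family.
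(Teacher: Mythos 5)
Your computation of $\dpr{\cl_{+,0}^{-1}[1]}{\vp_0}=0$ is correct and is genuinely slicker than what the paper does: the identities $\cl_{+,0}[\vp_0^2]=-3\vp_0^2+2c$ and $\cl_{+,0}[1]=1-6\vp_0^2$ check out against the profile equation and its first integral, $1-2\vp_0^2$ is even (hence orthogonal to $\ker\cl_{+,0}=\mathrm{span}\{\vp_0'\}$), $1-4c>0$ strictly for $m\in(0,1/\sqrt2)$, and integrating the profile equation over a period gives $\int\vp_0=2\int\vp_0^3$. The paper instead builds the Green's function of $L_+$ out of Jacobi elliptic functions in the appendix and grinds out the same identity; your closed-form preimage replaces that entire computation. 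The treatment of $\cl_{-,0}$ matches the paper's (ground-state/Sturm--Liouville argument).

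The two places where the proposal has genuine gaps are precisely the two hardest claims. First, the inequality $\dpr{\cl_{+,0}^{-1}\vp_0}{\vp_0}<0$: your Vakhitov--Kolokolov reduction to $\partial_\om\|\vp_\om\|^2|_{\om=1}>0$ is legitimate (modulo justifying that a smooth fixed-period branch $\om\mapsto\vp_\om$ of even solutions exists, which needs a non-degeneracy argument since varying $\om$ generically changes the period), but the reduction is not a proof. ``Check the sign in the limit $m\uparrow1/\sqrt2$ and rule out sign changes by continuity together with non-degeneracy of the family'' does not close the argument: continuity cannot exclude a zero of $\partial_\om\|\vp_\om\|^2$ at an interior modulus, and non-degeneracy of the family is not the same as non-vanishing of this derivative. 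The paper itself has to compute the quantity explicitly, obtaining $\dpr{L_+^{-1}\vp_0}{\vp_0}=\frac{E^2(\kappa)-(1-\kappa^2)K^2(\kappa)}{2[2(1-\kappa^2)K(\kappa)-(2-\kappa^2)E(\kappa)]}$ and checking its sign over $\kappa\in(0,1)$; some such explicit elliptic-integral verification (or a genuine monotonicity proof) is unavoidable and is missing from your write-up. Second, $n(\cl_{+,0})=1$ together with simplicity of the zero eigenvalue: the node count places $\vp_0'$ (two zeros per period) at $\lambda_1$ \emph{or} $\lambda_2$ of the periodic problem, so by itself it only gives $n(\cl_{+,0})\in\{1,2\}$, and $\dpr{\cl_{+,0}\vp_0}{\vp_0}<0$ only gives $n\geq1$. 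What you need is $\lambda_1=0<\lambda_2$, i.e., that there is no even periodic eigenfunction at or below zero other than the ground state; you assert this (``the even sector has trivial kernel at this energy for the Lam\'e-type potential'') without proof. It is true --- after rescaling, $-6\vp_0^2$ is the two-gap Lam\'e potential $6\kappa^2\,\mathrm{sn}^2$, whose first three periodic eigenfunctions are explicit and show $\mathrm{sn}\,\mathrm{cn}\propto\vp_0'$ sits at the simple eigenvalue $\lambda_1$ --- but that needs to be said or cited. The paper proves $n(\cl_{+,0})=1$ via a constrained minimization characterization of $\vp_0$ and proves simplicity via the Rofe--Beketov second solution, reducing it to the non-vanishing of an explicit integral verified in the appendix; either that route or the explicit Lam\'e spectrum should be supplied.
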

 {\bf Remark:} The condition $\dpr{\cl_{+,0}^{-1} \vp_0}{\vp_0}<0$ is equivalent to the stability of the wave $\vp_0$, in the context of the periodic NLS problem \eqref{1.1}, with $h=0$.
 \begin{proof}
 Note that $\vp_0$ satisfies \eqref{1.1} with $h=0$, hence $\cl_{-,0}[\vp_0]=0$. Since $\vp_0>0$, it follows
 by Sturm-Liouville theory that $\cl_{-,0}\geq 0$ and $0$ is the bottom of the spectrum and
 $\cl_{-,0}|_{\{\vp_0\}^\perp}>0$.

 Next, we show the properties of $\cl_{+,0}$. First, observe that the function $\vp_0$ can be realized as a (rescaled)
 minimizer of the following constrained minimization problem
 \begin{equation}
 \label{30}
 \left\{
 \begin{array}{l}
 \int_{-T}^T [(\vp'(x))^2 - \vp^4(x)] dx\to \min \\
 \int_{-T}^T \vp^2(x) dx=1
 \end{array}
 \right.
 \end{equation}
 Indeed, the existence of a minimizer of \eqref{30} is standard, moreover such minimizer is necessarily
  bell-shaped. Its Euler-Lagrange equation is in the form
\begin{equation}
\label{31}
\vp''- 2 \vp^3 = -\si \vp,
\end{equation}
which has an unique (bell-shaped) solution, namely a rescaled version of $\vp_0$.
As a consequence of this, it is standard to show that $n(\cl_{+,0})=1$, since $\dpr{\cl_{+,0} \vp_0}{\vp_0}<0$ and as a consequence of the minimization properties of $\vp_0$,
$\cl_{+,0}|_{\{\vp_0\}^\perp}\geq 0$. Also, by a direct differentiation of the profile equation \eqref{1.1} (with $h=0$), we conclude $\cl_{+,0}[\vp'_0]=0$.

Finally, let us show that $Ker[\cl_{+,0}]=span\{\vp_0'\}$. In order to do that, we will show that the  second independent solution of the equation  $\cl_{+,0}[g]=0$ does not belong to the space $L^2_{per.}[-T,T]$. Normally,
 such a solution $g$ can be written down by the reduction of order   formula as follows
$$
g(x)=\vp'_0(x)\int_{a}^x \f{1}{(\vp'_0(y))^2} dy
$$
The problem is that such a formula blows up whenever the interval of integration contains $0$. So, we use an alternative description of the eigenfunction, due to Rofe-Beketov, (see \cite{Teschl},  Exercise 5.11, p. 154)
$$
g(x)=\vp_0'(x) \int_0^x \f{(2-6\vp_0^2(y))((\vp_0'(y))^2-(\vp_0''(y))^2)}{((\vp_0'(y))^2+(\vp_0''(y))^2))^2} dy -
\f{\vp_0''(x)}{(\vp_0'(x))^2+(\vp_0''(x))^2}
$$
This function is well-defined and satisfies $\cl_{+,0}[g]=0$. In order to show that the eigenvalue at zero is simple, it suffices to prove that $g$ is not $2T$ periodic. Clearly, the second part of the formula in $g$ is $2T$ periodic, so we concentrate on showing that the first piece, $g_1(x)$  is non-periodic. In fact, $g_1(-T)=g_1(T)$, since $\vp_0'(T)=\vp'_0(-T)=0$.

We show that in fact $g_1'(-T)\neq g_1'(T)$. Since, $\vp_0''(-T)=\vp_0''(T)\neq 0$, it suffices to show that
$$
\int_0^T \f{(2-6\vp_0^2(y))((\vp_0'(y))^2-(\vp_0''(y))^2)}{((\vp_0'(y))^2+(\vp_0''(y))^2))^2} dy\neq \int_0^{-T} \f{(2-6\vp_0^2(y))((\vp_0'(y))^2-(\vp_0''(y))^2)}{((\vp_0'(y))^2+(\vp_0''(y))^2))^2} dy.
$$
Since the integrand is even, this is equivalent to
\begin{equation}
\label{posle}
\int_0^T \f{(2-6\vp_0^2(y))((\vp_0'(y))^2-(\vp_0''(y))^2)}{((\vp_0'(y))^2+(\vp_0''(y))^2))^2} dy\neq 0.
\end{equation}
We postpone the verification of \eqref{posle} and the proofs of $\dpr{\cl_{+,0}^{-1} \vp_0}{\vp_0}<0$
and $\dpr{\cl_{+,0}^{-1} \vp_0}{1}=0$ for the appendix.  The computations are somewhat  long and technical, but otherwise standard.
 \end{proof}

 We now continue with our investigation of the behavior of $\cl_{\pm}$, when $0<h<<1$. By a simple differentiation of \eqref{1.2}, we still obtain, even for $h\neq 0$, $\cl_{+,h}[\vp']=0$, so $0$ is still an eigenvalue. This is of course due to the translational invariance, which is preserved even after adding $h$.

 In order to set the stage for our later considerations, it is helpful to observe that given the relations \eqref{20},
 $$
 \vp_h=\vp_0+O(h), \ \cl_{\pm,h}=\cl_{\pm,0}+O_{B(L^2)}(h), \ \la_j(\cl_{\pm,h})=\la_j(\cl_{\pm,0})+O(h).
 $$
where we have used the notations $\la_0(\cl)\leq \la_1(\cl)\leq \ldots $ to enumerate the eigenvalues of a self-adjoint operator $\cl$ bounded from below,  in an increasing order.  In particular, it follows that $\la_0(\cl_{+,h})=\la_0(\cl_{+,0})+O(h)<0$ for small values of $h$, whereas $\la_1(\cl_{+,h})=0$, while $\la_2(\cl_{+,h})=\la_2(\cl_{+,0})+O(h)>0$. Thus, the structure of the spectrum for $\cl_{+,h}$ is the same as $\cl_{+,0}$ as described in Proposition \ref{prop:h0}. In particular, the operator $\cl_{+,h}$ is invertible on the subspace of even functions, since $Ker[\cl_{+,h}]=span[\vp'_h]\subset L^2_{odd.}$.

Our next result concerns the structure of $\cl_{-,h}$, when $h\neq 0$. Note that the modulational invariance is  lost after the addition of $h$, which is why the zero eigenvalue for $\cl_{-,0}$ is expected to move away from zero once we   turn on the $h$ parameter. Also, let us record the formula $\cl_{-,h} \vp_h=h$, which is just a restatement of \eqref{1.2}. We have the following lemma.
 \begin{lemma}
\label{le:10}
There exists $h_0>0$, so that for all $|h|<h_0$, we have the following formulas
\begin{eqnarray}
\label{25}
\vp_h &=& \vp_0 + h \cl_{+,0}^{-1}[1]+O(h^2),  \\
\label{27}
 \la_0(\cl_{-,h}) &=& \f{\int_{0}^{T} \vp_0(x)dx}{\int_{0}^{T} \vp_0^2 (x)dx} h+O(h^2)\\
 \label{28}
 \tilde{\vp}_h &=& \vp_0+  \cl_{-,0}^{-1}\left[4 \vp_0^2  \cl_{+,0}^{-1}[1] + \f{\int_{0}^{T} \vp_0(x)dx}{\int_{0}^{T} \vp_0^2 (x)dx}     \vp_0 \right] h +O(h^2),
\end{eqnarray}
where\footnote{The quantity $\left(4 \vp_0^2  \cl_{+,0}^{-1}[1] + \f{\int_{0}^{T} \vp_0(x)dx}{\int_{0}^{T} \vp_0^2 (x)dx}     \vp_0\right)  \perp \vp_0$, whence $\cl_{-,0}^{-1}$ is well-defined}  $\tilde{\vp}_h:  \cl_{-,h}[\tilde{\vp}_h]= \la_0(\cl_{-,h}) \tilde{\vp}_h$ is the  ground state of
$\cl_{-,h}$.
In particular, $\cl_{-,h}>0$ for $0<h<<1$.
\end{lemma}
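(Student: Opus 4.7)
All three statements would follow from a single perturbative analysis around $h=0$, using Proposition~\ref{prop:h0} to supply the spectral data of $\cl_{\pm,0}$.

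For \eqref{25}, the plan is to apply the implicit function theorem to the map $F(\vp,h):=\vp''-\vp+2\vp^3+h$ from the even subspace of $H^2_{per}[-T,T]\times\R$ into $L^2_{even}[-T,T]$. Its $\vp$-derivative at $(\vp_0,0)$ is $-\cl_{+,0}$, which by Proposition~\ref{prop:h0} has kernel spanned by the odd function $\vp_0'$, and hence is an isomorphism on the even subspace. Smoothness of $\vp_h$ in $h$ then lets me write $\vp_h=\vp_0+h\psi_1+O(h^2)$, and differentiating $F(\vp_h,h)\equiv 0$ in $h$ at $0$ yields $\cl_{+,0}[\psi_1]=1$, i.e.\ $\psi_1=\cl_{+,0}^{-1}[1]$.

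For the eigenvalue \eqref{27}, the key observation is the exact identity $\cl_{-,h}[\vp_h]=h$, which is just \eqref{1.2} rewritten. Pairing it with the ground state $\tilde{\vp}_h$ and using self-adjointness of $\cl_{-,h}$ gives
\beqn
\la_0(\cl_{-,h})\,\dpr{\tilde{\vp}_h}{\vp_h}=h\,\dpr{\tilde{\vp}_h}{1}.
\eeqn
Standard perturbation theory for a simple isolated eigenvalue gives $\tilde{\vp}_h=\vp_0+O(h)$, so dividing and expanding produces $\la_0(\cl_{-,h})=h\dpr{\vp_0}{1}/\|\vp_0\|^2+O(h^2)$, which by evenness of $\vp_0$ equals the claimed ratio of integrals over $[0,T]$. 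Positivity is immediate since $\vp_0>0$, and combined with the spectral gap of $\cl_{-,0}$ above $0$ (which survives the $O(h)$ operator-norm perturbation), this also yields $\cl_{-,h}>0$ for $0<h\ll 1$.

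For \eqref{28}, I would substitute the Rayleigh--Schr\"odinger ansatz $\tilde{\vp}_h=\vp_0+h\chi+O(h^2)$, normalized so that $\chi\perp\vp_0$, into the eigenvalue equation using $\cl_{-,h}=\cl_{-,0}-4h\vp_0\cl_{+,0}^{-1}[1]+O(h^2)$ (from \eqref{25}). Matching the $O(h)$ terms gives
\beqn
\cl_{-,0}[\chi]=4\vp_0^2\,\cl_{+,0}^{-1}[1]+\f{\int_{0}^{T}\vp_0(x)dx}{\int_{0}^{T}\vp_0^2(x)dx}\,\vp_0,
\eeqn
after which inverting $\cl_{-,0}$ on $\{\vp_0\}^\perp$ delivers the formula. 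The only nonroutine step I anticipate in the whole lemma is the solvability check promised in the footnote, namely that this right-hand side is orthogonal to $\mathrm{span}\{\vp_0\}=\mathrm{Ker}(\cl_{-,0})$. To prove it I would use the profile equation to replace $-\vp_0''+\vp_0$ by $2\vp_0^3$ inside $\cl_{+,0}[\vp_0]$, obtaining the clean identity $\cl_{+,0}[\vp_0]=-4\vp_0^3$; self-adjointness then gives $\dpr{\vp_0^3}{\cl_{+,0}^{-1}[1]}=-\tfrac14\dpr{\vp_0}{1}$, so the required orthogonality reduces to the cancellation $-\dpr{\vp_0}{1}+\dpr{\vp_0}{1}=0$.
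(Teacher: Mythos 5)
Your proposal is correct, and for \eqref{25} and \eqref{28} it follows essentially the same route as the paper: the paper also obtains \eqref{25} by differentiating the profile equation in $h$ (getting $\cl_{+,h}[\p_h\vp_h]=1$ and killing the kernel direction $\vp_h'$ by parity, which is the same evenness argument you build into your implicit-function-theorem setup), and it obtains \eqref{28} by exactly your Rayleigh--Schr\"odinger expansion $\tilde\vp_h=\vp_0+hz$ with $\cl_{-,h}=\cl_{-,0}-4h\vp_0\cl_{+,0}^{-1}[1]+O(h^2)$. Where you genuinely diverge is \eqref{27}: the paper extracts the coefficient $a$ in $\la_0(\cl_{-,h})=ah+O(h^2)$ as the solvability condition of the $O(h)$ corrector equation, i.e.\ by pairing $\cl_{-,0}z=4\vp_0^2\cl_{+,0}^{-1}[1]+a\vp_0$ with $\vp_0$ and then using $\cl_{+,0}[\vp_0]=-4\vp_0^3$ to evaluate $\dpr{\vp_0^3}{\cl_{+,0}^{-1}[1]}=-\tfrac14\dpr{\vp_0}{1}$; your derivation instead pairs the exact identity $\cl_{-,h}[\vp_h]=h$ with the ground state and uses self-adjointness to get $\la_0(\cl_{-,h})\dpr{\tilde\vp_h}{\vp_h}=h\dpr{\tilde\vp_h}{1}$, which yields \eqref{27} without ever touching $\cl_{+,0}^{-1}$. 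Your route is slightly more robust, since the eigenvalue asymptotics do not depend on having the expansion of the potential right; on the other hand the paper's computation is not wasted work, because the same identity $\cl_{+,0}^{-1}[\vp_0^3]=-\tfrac14\vp_0$ (mod $\mathrm{Ker}[\cl_{+,0}]$) is needed anyway to verify the orthogonality $4\vp_0^2\cl_{+,0}^{-1}[1]+a\vp_0\perp\vp_0$ that makes \eqref{28} well-posed --- a check you correctly identify and carry out, and which in the paper is implicit in the very computation that determines $a$. Both arguments are complete and mutually consistent.
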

 {\bf Remark:} A simple perturbation argument shows that $\la_1(\cl_{-,h})=\la_1(\cl_{-,0})+O(h)$, which is well-separated from zero.

 \begin{proof}
 By differentiating with respect to $h$ the profile equation, we obtain $\cl_{+,h}[\p_h \vp_h]=1$. As a consequence, since we know $Ker[\cl_{+,h}]=span[\vp'_h]$ (and hence $1\perp Ker[\cl_{+,h}]$),
 $$
 \p_h\vp_h=\cl_{+,h}^{-1}[1]+\de \vp_h',
 $$
 for some $\de$. We claim $\de=0$. Indeed, we know that $\vp_h$ is an even function, and so is $\p_h \vp_h$. Clearly $\cl_{+,h}$ (and its inverse on $Ker[\cl_{+,h}]^\perp$)  acts invariantly on the even subspace, so $\cl_{+,h}^{-1}[1]$ is even as well. Thus, the odd piece $\de \vp_h'$ is actually zero, whence  $\de=0$. Thus,
 \begin{equation}
 \label{35}
 \vp_h=\vp_0+h \cl_{+,h}^{-1}[1]+O(h^2).
 \end{equation}
 Next, since $\cl_{-,0}$ has a simple eigenvalue at zero, $L_{-,h}$ has a single eigenvalue close to zero,
 in the form $\la_0(\cl_{-,h})=a h+O(h^2)$. Say the corresponding eigenfunction is in the form $\vp_0+ h z$,
 $z\in H^2[-T,T]$. Thus,
\begin{equation}
\label{40}
\cl_{-,h}[\vp_0+ h z]=a h(\vp_0+ h z).
\end{equation}
 However, by \eqref{35},
\begin{eqnarray*}
\cl_{-,h} &=&-\p_x^2+1-2\vp_h^2=-\p_x^2+1-2 \vp_0^2 - 4 h  \cl_{+,h}^{-1}[1] \vp^0  +O(h^2)= \\
&=&\cl_{-,0} - 4 h \vp_0   \cl_{+,h}^{-1}[1] +O(h^2).
\end{eqnarray*}
By taking the first order in $h$ terms in \eqref{40}, we obtain
$$
 \cl_{-,0} z =4 \vp_0^2  \cl_{+,h}^{-1}[1] +a \vp_0.
$$
 Now, take dot product with $\vp_0$. Note that since $1\perp Ker[\cl_{+,h}]$, we have that \\  $\cl_{+,h}^{-1}[1]=\cl_{+,0}^{-1}[1]+O(h)$. Since $\dpr{\cl_{-,0} z }{\vp_0}= \dpr{ z }{\cl_{-,0}[\vp_0]}=0$,
we obtain the relation
$$
a\|\vp_0\|^2+4 \dpr{\vp_0^3}{\cl_{+,0}^{-1}[1]}=0.
$$
Thus,
$
a=-\f{4}{\|\vp_0\|^2} \dpr{\cl_{+,0}^{-1}[\vp_0^3]}{1}.
$
However,  the profile equation can be rewritten as  $$\cl_{+,0}[\vp_0]=-4 \vp_0^3,$$
whence $\cl_{+,0}^{-1}[\vp_0^3]=-\f{1}{4} \vp_0+\de \vp_0'$ and
$$
a= \f{\int_{-T}^{T} \vp_0(x)dx}{\int_{-T}^{T} \vp_0^2 (x)dx}>0.
$$
Also,
$$
z=\cl_{-,0}^{-1}\left[4 \vp_0^2  \cl_{+,0}^{-1}[1] + \f{\int_{-T}^{T} \vp_0(x)dx}{\int_{-T}^{T} \vp_0^2 (x)dx}  \vp_0 \right]+O(h),
$$
which is \eqref{28}.
 \end{proof}

 \subsection{The linearization about $\vp_h$ and the instability  of $\vp_h$ }
 Let $u(t,x)=\varphi(x)+v(t,x)$, where $v$ is a complex-valued function. Plugging  this in \eqref{1.1} and ignoring the contributions of all terms in the form $O(v^2)$, we obtain
 \begin{eqnarray*}
 & & -v_{2t}+v_{1xx}-v_1+6\varphi^2v_1 = 0 \\
& &      v_{1t}+v_{2xx}-v_2+2\varphi^2v_2=0
 \end{eqnarray*}
 This is clearly in the form
  $$
     \cj \left( \begin{array}{ll} \cl_{+,h} & 0 \\0  & \cl_{-,h} \end{array} \right)\left( \begin{array}{ll} v_{1}\\ v_{2} \end{array} \right)= \left( \begin{array}{ll} v_{1t} \\ v_{2t} \end{array} \right)
     $$
      where $\cj=\left( \begin{array}{ll} 0 & 1 \\ -1&0 \end{array} \right)$. Introduce
      $\cl_h:=\left( \begin{array}{ll} \cl_{+,h} & 0 \\0  & \cl_{-,h} \end{array} \right) $.
Taking the ansatz $ \left( \begin{array}{ll} v_{1}\\ v_{2} \end{array} \right)\to e^{\la t} \left( \begin{array}{ll} v_{1}\\ v_{2} \end{array} \right)$
    \begin{equation}
    \label{24}
    \cj\cl_h \left( \begin{array}{ll} v_{1}\\ v_{2} \end{array} \right)= \la \left( \begin{array}{ll} v_{1}\\ v_{2} \end{array} \right)
    \end{equation}
 Thus, the stability of the wave $\vp_h$  is determined from the eigenvalue problem \eqref{24}.
 Following the usual notions of (spectral) stability, we say that the wave is spectrally stable, if \eqref{24} has no non-trivial solutions (that is $\vec{v} \neq 0$ ) ,
 $(\la, \vec{v}): \vec{v}\in H^2[-T,T]$ with $\Re\la>0$.

 As an immediate consequence of the results of Lemma \ref{le:10}, we can conclude the instability for the eigenvalue problem \eqref{24}. Indeed, we have that $n(\cl_{h})=n(\cl_{+,h})+n(\cl_{-,h})=1$, while $Ker[\cl_{-,h}]=\{0\}$,
 $Ker[\cl_{+,h}]=span[\p_x \vp_h]$.  In addition, since   $\dpr{\cl_{-,h}^{-1} [\p_x \vp^h]}{\p_x \vp^h}>0$, by the positivity of $\cl_{-,h}$ (and hence $\cl_{-,h}^{-1}$), we conclude
 $
 n(D)= n(\dpr{ \cl_{-,h}^{-1}[\p_x \vp_h]}{\p_x \vp_h})=0.
 $
 By the instability index counting theory, we conclude that the eigenvalue problem \eqref{20} has a single real unstable eigenvalue for all small values of $h$. This completes the proof of Proposition \ref{prop:10}.
 \subsection{A precise asymptotic for the unstable eigenvalue}
 For the purposes of the analysis of the full problem (that is with $h\neq 0, \al\neq 0$), we need to compute the unstable eigenvalue of the eigenvalue  problem \eqref{24}, at least to leading order in $h$.

 To this end,  for the spectral analysis of \eqref{24}, we are  looking to find the pair $\la=0, \vec{v}=\left(\begin{array}{ll} 0\\ \vp_0 \end{array} \right)$, which solves \eqref{24} for $h=0$.  In other words, we claim that the instability established in Proposition \ref{prop:10} is due to the bifurcation of the zero eigenvalue\footnote{present at $h=0$, corresponding to modulational invariance, which comes with algebraic multiplicity two} onto the real line - one positive and one negative.
 Multiplying  \eqref{24}by $\cj$ and taking the ansatz $v_2=\vp_h+h z, z\in L^2_{even}[-T,T]$ (and observing that   $\cl_{+,h}$ is invertible on the even subspace), we obtain
 \begin{equation}
\label{60}
\cl_{-,h}[\vp_h+ h z]=-\la^2 \cl_{+,h}^{-1}[\vp_h+h z].
 \end{equation}
 Taking into account $\cl_{-,h}[\vp_h]=h$ and $\cl_{+,h}^{-1}[\vp_h]=\cl_{+,0}^{-1}[\vp_0]+O(h)$,
  we arrive at
  \begin{equation}
  \label{70}
  h(1+\cl_{-,h}[z])=-\la^2[\cl_{+,0}^{-1}[\vp_0]+F(h,z)], \ \ F(h,z)=O(h)+O(z).
  \end{equation}
 It becomes clear that the ansatz for the eigenvalue $\la$ must be in the form $\la=a\sqrt{h}+O(h)$, whence by taking dot product of \eqref{70} with $\vp_h$, and taking only $O(h)$ terms
 $$
 -a^2 \dpr{\cl_{+,0}^{-1} \vp_0} {\vp_0}=\int_{-T}^T  \vp_0(x) dx.
 $$
 Recalling $\dpr{\cl_{+,0}^{-1} \vp_0} {\vp_0}<0$, this yields the formula
 $$
 a=\sqrt{\f{\int_{-T}^T  \vp_0(x) dx}{-\dpr{\cl_{+,0}^{-1} \vp_0} {\vp_0}}}>0.
 $$
 Furthermore, \eqref{70} is solvable for small $h$, by the inverse function theorem. In this way, we have shown the following, more precise and quantitative version of Proposition \ref{prop:10}.
 \begin{proposition}
 \label{prop:11}
 There exists $h_0>0$, so that for all $h: 0<h<h_0$, the eigenvalue problem \eqref{24} has the unstable eigenvalue in the form
 $$
 \la_h=\sqrt{\f{\int_{-T}^T  \vp_0(x) dx}{-\dpr{\cl_{+,0}^{-1} \vp_0} {\vp_0}}} \sqrt{h}+O(h).
 $$
 Beyond that, all the other spectrum is stable. In fact,
 $$
 \si(\cj\cl_{h})\setminus \{\la_h, -\la_h\}\subset i {\mathbf R}.
 $$
 \end{proposition}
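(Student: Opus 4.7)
The formal calculation preceding the statement already identifies the expected eigenvalue, so my plan is to make that calculation rigorous via the implicit function theorem, and then to describe the remainder of $\si(\cj \cl_h)$ by a similarity trick based on the positivity of $\cl_{-,h}$.

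First, since $\cl_{+,h}$ is invertible on the even subspace $L^2_{even}[-T,T]$ for small $h$, any even solution $(v_1,v_2)$ of \eqref{24} satisfies the scalar equation $\cl_{-,h} v_2 + \la^2 \cl_{+,h}^{-1} v_2 = 0$. Writing $v_2 = \vp_h + h z$ with $z \in L^2_{even}\cap \{\vp_0\}^\perp$, setting $\mu = \la^2/h$, and using $\cl_{-,h}\vp_h = h$ from \eqref{1.2}, the equation divides by $h$ and becomes
\[
F(\mu, z; h) := 1 + \cl_{-,h} z + \mu\, \cl_{+,h}^{-1}[\vp_h + h z] = 0.
\]
At $h=0$ this reads $1 + \cl_{-,0} z + \mu\,\cl_{+,0}^{-1}\vp_0 = 0$; pairing against $\vp_0$ eliminates the $\cl_{-,0} z$ term (Proposition \ref{prop:h0}) and forces
\[
\mu_0 = \f{\dpr{1}{\vp_0}}{-\dpr{\cl_{+,0}^{-1}\vp_0}{\vp_0}} > 0,
\]
after which $\cl_{-,0}$ is inverted on $\{\vp_0\}^\perp$ to produce the unique $z_0$. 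I would then verify that $D_{(\mu,z)}F$ at $(\mu_0, z_0, 0)$ is an isomorphism from $\mathbf{R}\oplus(L^2_{even}\cap \{\vp_0\}^\perp)$ onto $L^2_{even}$, which reduces to the two facts just used: invertibility of $\cl_{-,0}$ on $\{\vp_0\}^\perp$ and $\dpr{\cl_{+,0}^{-1}\vp_0}{\vp_0}\neq 0$. The implicit function theorem then produces smooth $(\mu(h),z(h))$, and $\la_h = \sqrt{h\,\mu(h)} = \sqrt{\mu_0}\sqrt{h}+O(h)$ is the stated asymptotic.

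For the spectrum description, Lemma \ref{le:10} yields $\cl_{-,h} > 0$ for $0 < h \ll 1$, so the bounded invertible self-adjoint square root $\cl_{-,h}^{1/2}$ exists. For any $\la\neq 0$ in $\si(\cj\cl_h)$, substituting $v_2=\la\,\cl_{-,h}^{-1}v_1$ into \eqref{24} reduces the problem to $\cl_{-,h}\cl_{+,h} v_1 = -\la^2 v_1$, and the change of variable $w = \cl_{-,h}^{-1/2} v_1$ converts this into the self-adjoint spectral problem
\[
T_h w := \cl_{-,h}^{1/2}\,\cl_{+,h}\,\cl_{-,h}^{1/2}\, w = -\la^2 w.
\]
Therefore $\la^2 \in \mathbf{R}$, so every nonzero $\la\in\si(\cj\cl_h)$ lies in $\mathbf{R}\cup i\mathbf{R}$. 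By Sylvester's law of inertia applied to the congruence via the invertible positive operator $\cl_{-,h}^{1/2}$, the negative index satisfies $n(T_h) = n(\cl_{+,h}) = 1$, so $T_h$ has exactly one negative eigenvalue; this contributes exactly one pair $\pm\la_h$ of real eigenvalues to $\cj\cl_h$, which must coincide with the pair constructed in the first part. All remaining eigenvalues of $T_h$ are $\geq 0$, so the corresponding $\la^2\leq 0$ and $\la\in i\mathbf{R}$. Combined with $0\in i\mathbf{R}$, this gives $\si(\cj\cl_h)\setminus\{\pm\la_h\}\subset i\mathbf{R}$.

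The one delicate step I anticipate is the Lyapunov--Schmidt bookkeeping in the first paragraph: the singularity of $\cl_{-,0}$ along $\vp_0$ forces $\mu_0$ to be pinned down by the scalar solvability condition before $z_0$ can be recovered on $\{\vp_0\}^\perp$, and the function-space splitting must be chosen carefully so that $D_{(\mu,z)}F$ becomes an isomorphism at the base point. The similarity argument is by comparison essentially a one-line reduction, depending only on the positivity of $\cl_{-,h}$ provided by Lemma \ref{le:10} and the index count $n(\cl_{+,h})=1$ from Proposition \ref{prop:h0}.
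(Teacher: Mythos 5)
Your proposal is correct. The first half --- the reduction on the even subspace to $\cl_{-,h}v_2=-\la^2\cl_{+,h}^{-1}v_2$, the ansatz $v_2=\vp_h+hz$, the use of $\cl_{-,h}\vp_h=h$, the solvability condition pinning $\mu_0=\dpr{1}{\vp_0}/(-\dpr{\cl_{+,0}^{-1}\vp_0}{\vp_0})$, and the implicit function theorem --- is exactly the paper's argument, merely with the Lyapunov--Schmidt splitting spelled out more explicitly than in the text. Where you genuinely diverge is the containment $\si(\cj\cl_h)\setminus\{\pm\la_h\}\subset i\R$: the paper obtains this from the Hamiltonian--Krein index count, namely $n(\cl_h)=n(\cl_{+,h})+n(\cl_{-,h})=1$ and $n(D)=0$ (since $\dpr{\cl_{-,h}^{-1}\vp_h'}{\vp_h'}>0$), so that $k_r+2k_c+2k_i^-=1$ and the already-exhibited real pair forces $k_c=k_i^-=0$. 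You instead symmetrize: since $\cl_{-,h}>0$, the substitution $w=\cl_{-,h}^{-1/2}v_1$ converts the quadratic pencil $\cl_{-,h}\cl_{+,h}v_1=-\la^2 v_1$ into the self-adjoint problem $T_hw=-\la^2 w$, so $\la^2\in\R$ for the whole spectrum at once, and Sylvester's inertia gives $n(T_h)=n(\cl_{+,h})=1$, hence exactly one real pair. Your route is more elementary (no Krein-signature machinery) and both routes ultimately rest on the same two inputs, the positivity of $\cl_{-,h}$ from Lemma \ref{le:10} and $n(\cl_{+,h})=1$; the paper's index count is the more robust tool when $\cl_-$ fails to be sign-definite, which is why the paper sets it up (it is reused in the later stability analysis for $\al\neq 0$). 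Two small repairs: $\cl_{-,h}^{1/2}$ is an \emph{unbounded} positive operator (only $\cl_{-,h}^{-1/2}$ is bounded), so $T_h$ should be understood through its quadratic form $\dpr{\cl_{+,h}\cl_{-,h}^{1/2}w}{\cl_{-,h}^{1/2}w}$, where the inertia argument is carried out; and you should note that $v_1\neq 0$ for any nonzero eigenvalue, which follows from $Ker[\cl_{-,h}]=\{0\}$, so no spectrum is lost in the reduction.
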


 \section{The construction of the waves for for $0<h<<1, 0<\al<<1$}
 We now proceed with the construction of the waves in the regime where both parameters $h, \al$  are turned on.  We henceforth assume $h>0$.

 In addition, we wish to keep the solutions in the even class.
 Let   $\varphi_{\alpha}(x)=\varphi_{\alpha, 1}+i\varphi_{\alpha, 2}$ be  a solution of \eqref{1.1}.
 That is
 \begin{equation}
 \label{150}
 \left| \begin{array}{ll} \varphi_{\alpha,1}^{''}-\varphi_{\alpha,1}+2(\varphi_{\alpha,1}^2+\varphi_{\alpha, 2}^2)\varphi_{\alpha, 1}=\alpha\varphi_{\alpha, 2}-h \\
 \\
 \varphi_{\alpha,2}^{''}-\varphi_{\alpha,2}+2(\varphi_{\alpha,1}^2+\varphi_{\alpha, 2}^2)\varphi_{\alpha, 2}=-\alpha\varphi_{\alpha, 1}
 \end{array} \right.
 \end{equation}
For more symmetric formulation, introduce
$$
\vpt_1:= \varphi_{\alpha,1}+\varphi_{\alpha, 2}, \vpt_2:= \varphi_{\alpha,1}-\varphi_{\alpha, 2}.
$$
We have the equations
 \begin{equation}
 \label{151}
 \left| \begin{array}{ll} \vpt_1^{''}-\vpt_1+(\vpt_1^2+\vpt_2^2)\vpt_1=-\alpha\vpt_2-h \\
 \\
 \vpt_2''-\vpt_2+(\vpt_1^2+\vpt_2^2)\vpt_2=\al \vpt_1 - h.
 \end{array} \right.
 \end{equation}
Introducing the operator $\tcl=-\p_x^2+1-(\vpt_1^2+\vpt_2^2)=-\p_x^2+1-2 (\vp_{\al,1}^2+\vp_{\al,2}^2)$, we can rewrite the previous relations in the form
$\tcl[\vpt_1]=\al\vpt_2+h, \tcl[\vpt_2]=h-\al\vpt_1$. Applying $\cl$ to the first equation, we obtain,  in terms of $\vp_{\al,1}, \vp_{\al,2}$
 \begin{equation}
 \label{160}
 \left| \begin{array}{l}
 (\tcl^2+\al^2) [\vp_{\al,1}]=h\tcl[1], \\
 (\tcl^2+\al^2)[\vp_{\al,2}]=\al h.
 \end{array} \right.
 \end{equation}

 It is now useful to perform some analysis in the regime $h<<1$. If $\tcl$ does not have any eigenvalues close to zero, that is $\tcl^2\geq \de^2, \de=O(1)$, we will have from \eqref{160} that $\vp_{\al,1}=O(h), \vp_{\al,2}=O(h)$, whence we have $\tcl=-\p_x^2+1-O(h)$. In this case, one can show that \eqref{160} has (small) solutions, given approximately by
 \begin{equation}
 \label{162}
 \left| \begin{array}{l}
\vp_{\al,1}=h (-\p_x^2+1)^{-1}[1]+O(h^2) \\
 \vp_{\al,2}= \al h (-\p_x^2+1)^{-2} [1]+O(h^2)
 \end{array} \right.
 \end{equation}
 So, we have shown the following.
 \begin{proposition}(Existence of small solutions)
 	There exists $h_0>0$, so that for all $0<h<h_0, \al>0$ there exists a solution of \eqref{160}, in the form \eqref{162}.
 \end{proposition}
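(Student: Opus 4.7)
My plan is to produce small stationary solutions of \eqref{160} by a Banach fixed-point argument in the even subspace of $H^2_{per}[-T,T]$. The central observation is that if $(\vp_{\al,1}, \vp_{\al,2})$ is of size $O(h)$, then the Schr\"odinger operator $\tcl = -\p_x^2 + 1 - 2(\vp_1^2 + \vp_2^2)$ is a perturbation of size $O(h^2)$ of $A_0 := -\p_x^2 + 1$, which is self-adjoint on $L^2_{per}[-T,T]$ with spectrum contained in $[1,\infty)$. Consequently, on a sufficiently small ball, $\tcl$ stays bounded below by $1/2$ as a self-adjoint operator, so $\tcl^2 + \al^2 \geq 1/4$ is uniformly invertible with $\|(\tcl^2 + \al^2)^{-1}\|_{L^2 \to H^2} \leq C$ uniformly over the ball and all $\al \geq 0$.

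I would then define the fixed-point map $T: (\vp_1, \vp_2) \mapsto (\tilde\vp_1, \tilde\vp_2)$ by
\begin{equation*}
\tilde\vp_1 := h\,(\tcl^2 + \al^2)^{-1}\tcl[1], \qquad \tilde\vp_2 := \al h \,(\tcl^2 + \al^2)^{-1}[1],
\end{equation*}
where $\tcl = \tcl[\vp_1, \vp_2]$. A fixed point of $T$ is exactly a solution of \eqref{160}. On the ball $B_{Ch} := \{(\vp_1,\vp_2): \max_j \|\vp_j\|_{H^2} \leq Ch\}$ one verifies the two Banach hypotheses. \emph{Invariance} follows from $\|\tcl[1]\|_{L^2} = \|1-2(\vp_1^2+\vp_2^2)\|_{L^2} = O(1)$ combined with the uniform operator bound above, yielding $\|\tilde\vp_1\|_{H^2}\lesssim h$ and $\|\tilde\vp_2\|_{H^2} \lesssim \al h$, so $B_{Ch}$ is mapped into itself for $C$ large and $h$ small. \emph{Contractivity} uses the resolvent identity applied to $\tcl[\vp]^2 - \tcl[\psi]^2$; the leading difference is multiplication by $-2[(\vp_1^2-\psi_1^2)+(\vp_2^2-\psi_2^2)]$, which on $B_{Ch}$ is controlled by $O(h)\|\vp-\psi\|_{H^2}$. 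This propagates through the uniformly bounded inverse $(\tcl^2+\al^2)^{-1}$ to give a Lipschitz constant of order $h$ for $T$, which is a strict contraction for $h$ small.

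The asymptotic form \eqref{162} is then read off the fixed-point equation by a Neumann expansion in $\tcl - A_0 = O(h^2)$: at leading order $\vp_{\al,1} = h\, A_0 (A_0^2+\al^2)^{-1}[1] + O(h^3)$ and $\vp_{\al,2} = \al h\,(A_0^2+\al^2)^{-1}[1] + O(h^3)$, which collapses to \eqref{162} after absorbing the $\al$-dependent factors in $(A_0^2+\al^2)^{-1}$ into the error term. The main technical point throughout is the uniform lower bound on $\tcl$ across the ball, but this is immediate from the spectral gap of $A_0$; no eigenvalue-crossing obstacle arises precisely because we operate in a regime where $\tcl$ stays strictly positive, which is the key structural difference from the large-amplitude cnoidal regime treated later in the paper.
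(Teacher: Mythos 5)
Your fixed-point argument is correct and is essentially the rigorous version of the paper's own (two-line) justification, which likewise treats $\tcl$ as a uniformly positive perturbation of $-\p_x^2+1$, notes that $\tcl^2+\al^2$ is then uniformly invertible so the solution is $O(h)$, and reads off \eqref{162}; where the paper writes ``one can show,'' you supply the contraction estimates. The one point to tighten is the invariance bound for $\tilde\vp_2$: since the statement asks for $h_0$ independent of $\al>0$, the estimate $\|\tilde\vp_2\|_{H^2}\lesssim \al h$ does not close the ball for large $\al$, and you should instead use the uniform bound $\al\,\|(\tcl^2+\al^2)^{-1}\|_{L^2\to H^2}\le C$ (valid for all $\al\ge 0$), which follows from $\|\tcl u\|^2\le \langle (\tcl^2+\al^2)u,u\rangle$ together with $\|u\|\le (\de^2+\al^2)^{-1}\|f\|$.
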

 We will proceed to show that such solutions are nicely behaved with respect to stability. Unfortunately, these solutions are not very useful from a practical point of view, since they are small and it is not easy to use them in signal detection devices etc.
 \subsection{$O(1)$ solutions of \eqref{151} - an informal analysis of the profile equation}

 As we have mentioned above, we shall use $h$ as a small parameter, by taking $\al:=\al_0 h, \al_0=O(1)$.  Next, we assume that \eqref{151} (or equivalently \eqref{160}) has a solution.  In addition, we model  $\tcl$ to be a small perturbation of $\cl_{-,0}$. In particular, it has a small (and simple) eigenvalue close to zero (in order to produce $O(1)$ solutions of \eqref{160}), denoted by $\si_h=\si_0 h+O(h^2)$, with a corresponding eigenfunction $\vp_h=\vp_0+O(h)$. In addition, the next eigenvalue is positive and order $O(1)$.

   By projecting \eqref{160}  onto $\vp_h$ and its complementary subspace  $\{\vp_h\}^\perp$, we arrive at the formula
 \begin{eqnarray}
 \label{165}
 \vp_{\al,1} &=& \f{h \si}{\si^2+\al^2} \f{\dpr{1}{\vp_h}}{\|\vp_h\|^2} \vp_h+q_1, \ \ q_1=O(h), q_1 \in \{\vp_h\}^\perp \\
 \label{167}
  \vp_{\al,2} &=& \f{\al h}{\si^2+\al^2} \f{\dpr{1}{\vp_h}}{\|\vp_h\|^2} \vp_h+q_2, \ \ q_2=O(h^2), q_2 \in \{\vp_h\}^\perp
 \end{eqnarray}
 One can in principle continue with the construction of $\vp_{\al,1}, \vp_{\al,2}$ based on \eqref{165} and
 \eqref{167}, but it becomes hard to keep track of the expansion of $\si_h$ in powers of $h$. Instead, we will pass to the known waves $\vp_0$, since we have a good understanding  of the operator $\cl_{-,0}$, which we denote by $L_{-}$ henceforth\footnote{Also, we introduce $L_+:=\cl_{+,0}$}. More precisely, we postulate the form
 \begin{eqnarray}
 \label{170}
 \vp_{\al,1} &=& (a_0+a_1 h+O(h^2))\vp_0+ h \Psi_1, \ \ \Psi_1\perp \vp_0  \\
 \label{175}
  \vp_{\al,2} &=& (b_0+b_1 h+O(h^2))\vp_0+ h \Psi_2,  \ \ \Psi_2\perp \vp_0
 \end{eqnarray}
 Comparing the expansions \eqref{165} with \eqref{170}(and \eqref{167} with \eqref{175} respectively), we have the formula
 \begin{eqnarray}
 \label{180}
  a_0 &=& \f{\si_0}{\si_0^2+\al_0^2} \f{\dpr{1}{\vp_0}}{\|\vp_0\|^2}  \\
 \label{185}
 b_0 &=& \f{\al_0}{\si_0^2+\al_0^2} \f{\dpr{1}{\vp_0}}{\|\vp_0\|^2}
\end{eqnarray}
 Next, using the form of the operator $\tcl$, we have
 $$
 \tcl=-\p_x^2+1 - 2(\vp_{\al,1}^2+\vp_{\al,2}^2)= -\p_x^2+1 - 2(a_0^2+b_0^2)\vp_0^2+O(h).
 $$
 Since we require that $\tcl$ be a perturbation of $L_-$, we must have $a_0^2+b_0^2=1$. This, together with \eqref{180} and \eqref{185} implies that $\si_0$ is completely determined by $\al_0$ and in fact,
 \begin{equation}
 \label{211}
 \si_0^2+\al_0^2=\f{\dpr{1}{\vp_0}^2}{\|\vp_0\|^4}.
 \end{equation}
 We can rewrite the equation \eqref{151} equivalently as follows
 \begin{equation}
 \label{190}
 (\tcl-i\al)[i (\vp_{\al,1}+\vp_{\al,2})+(\vp_{\al,1}-\vp_{\al,2})]=h(1+i)
 \end{equation}
 Denoting $q:=i(\vp_{\al,1}+\vp_{\al,2})+(\vp_{\al,1}-\vp_{\al,2})$, we see that we can write
 $\tcl=-\p_x^2+1 - |q|^2$. In addition, $q$ has the representation
 \begin{equation}
 \label{195}
 q=(c_0+c_1 h+O(h^2))\vp_0+ h \Psi, \ \ \Psi\perp \vp_0,
 \end{equation}
 where clearly $c_0=i(a_0+b_0)+(a_0-b_0)$ can be expressed in terms of $\al_0$. For example, $|c_0|^2=2(a_0^2+b_0^2)=2$. Compute
 \begin{eqnarray*}
 |q|^2 &=& |c_0|^2 \vp_0^2 +2 h \vp_0^2 \Re[c_0\bar{c_1}]+h\vp_0[c_0\bar{\Psi}+\bar{c_0}\Psi]+O(h^2) \\
 &=& 2 \vp_0^2 + h V_\Psi+O(h^2),
 \end{eqnarray*}
 upon introducing $V_\Psi:=2  \vp_0^2 \Re[c_0\bar{c_1}]+\vp_0[c_0\bar{\Psi}+\bar{c_0}\Psi]$.
 It follows that
 $$
 \tcl=-\p_x^2+1 - |q|^2= L_- -h V_\Psi +O(h^2),
 $$
 whence   \eqref{190} becomes
 \begin{equation}
 \label{200}
 (L_--h (V_\Psi+i \al_0) +O(h^2))((c_0+c_1 h+O(h^2))\vp_0+ h \Psi)=h(1+i).
 \end{equation}
 In order to resolve this equation, we need to go in powers of $h$. The terms with power $h^0$ are clearly absent, due to $L_-[\vp_0]=0$, which is just the profile equation. For the first order in $h$ terms, we have the equation
 \begin{equation}
 \label{220}
 L_-\Psi-(V_\Psi+i \al_0)c_0 \vp_0=1+i.
 \end{equation}
 Taking \eqref{220}, and its complex conjugate,  and in addition the form of $V_\Psi$, $|c_0|^2=2$, we arrive at the system
 \begin{equation}
 \label{210}
 \left(\begin{array}{cc}
 -\p_x^2+1- 4\vp_0^2 & -c_0^2 \vp_0^2 \\
 -\bar{c}_0^2 \vp_0^2 & -\p_x^2+1- 4\vp_0^2
 \end{array}\right) \left(\begin{array}{c}
  \Psi \\ \bar{\Psi}
 \end{array}\right)= \left(\begin{array}{c}
 1+i +i \al_0 c_0 \vp_0 +2 \vp_0^3 c_0   \Re[c_0\bar{c_1}] \\  1-i -i \al_0 \bar{c}_0 \vp_0 + 2 \vp_0^3 \bar{c}_0   \Re[c_0\bar{c_1}]
 \end{array}\right)
 \end{equation}
 Diagonalizing the system leads to  the equations
 \begin{eqnarray}
 \label{215}
& &  L_+[\bar{c}_0\Psi+c_0\bar{\Psi}]= c_0+\bar{c}_0+i(\bar{c}_0-c_0)+8 \vp_0^3  \Re[c_0\bar{c_1}],  \\
 \label{217}
& &  L_-[-\bar{c}_0\Psi+c_0\bar{\Psi}] = c_0-\bar{c}_0-i(c_0+\bar{c}_0)-4 i \al_0 \vp_0.
 \end{eqnarray}
 Note that one  solvability condition for \eqref{217} is exactly $\dpr{c_0-\bar{c}_0-i(c_0+\bar{c}_0)-4 i \al_0 \vp_0}{\vp_0}=0$. Elementary computations show that this is equivalent to  $b_0 \dpr{1}{\vp_0}=\al_0 \|\vp_0\|^2$, which is exactly the relation \eqref{185}and \eqref{211}.

 The other relation, is that since $\Psi\perp \vp_0$, we need to have $\bar{c}_0\Psi+c_0\bar{\Psi}\in \{\vp_0\}^\perp$.
 This  imposes the relation $L_+^{-1}[c_0+\bar{c}_0+i(\bar{c}_0-c_0)+8 \vp_0^3  \Re[c_0\bar{c_1}]] \in \{\vp_0\}^\perp$ or equivalently
 $$
 [c_0+\bar{c}_0+i(\bar{c}_0-c_0)] \dpr{ L_+^{-1}[1]}{\vp_0}+8  \Re[c_0\bar{c_1}]\dpr{L_+^{-1}[\vp_0^3]}{\vp_0}=0.
 $$
 In fact, since $\dpr{ L_+^{-1}[1]}{\vp_0}=0$ and $\dpr{L_+^{-1}[\vp_0^3]}{\vp_0}=-\f{1}{4} \dpr{\vp_0}{\vp_0}\neq 0$, it follows that $\Re[c_0\bar{c_1}]=0$.
   It even looks as if we have one degree of  freedom, since $c_1$ is complex valued (and hence two parameters are involved). In the actual non-linear problem however, we need to involve a higher order solvability condition for \eqref{217}, which will finally yield the right number of relations.

 \subsection{Solutions of \eqref{151} - rigorous construction}
 We now setup the full non-linear problem \eqref{151}, with $0<h<<1$, in the equivalent formulation \eqref{190}. More precisely, armed with the results from our informal analysis, we  set the unknown function \\  $q=\vp_{\al,1}-\vp_{\al,2}+i(\vp_{\al,1}+\vp_{\al,2}) \in L^2_{per.}[-T,T]$ in the form
 $$
 q=(c+d h)\vp_0+ h \Psi, \Psi\perp \vp_0
 $$
 where
 $$
 c=a_0-b_0+i(a_0+b_0),
 $$
  and $a_0, b_0$ are given by \eqref{180}, \eqref{185} and \eqref{211}), in terms of $\al_0$. Note that $|c|^2=2$. Also, in accordance with \eqref{211}, we require $\al_0: 0<\al_0 < \f{\dpr{1}{\vp_0}}{\|\vp_0\|^2}$.

Now that we have set $q$ (and in particular $a_0, b_0$), we are looking for a scalar function $d=d(h)$ and a function $\Psi(h)\in \{\vp_0\}^\perp$, so that \eqref{190} holds. We compute
$$
|q|^2=2\vp_0^2 +h V+h^2[|d|^2\vp_0^2+\vp_0(d \bar{\Psi}+\bar{d} \Psi) +|\Psi|^2],
$$
 where
 $$
 V= 2  \vp_0^2 \Re[c\bar{d}]+2 \vp_0\Re[c \bar{\Psi}]
 $$
  is a real-valued function as
  before. Introduce the {\it real-valued function}
 $$
 G=G(d, \Psi)=|d|^2\vp_0^2+\vp_0(d \bar{\Psi}+\bar{d} \Psi) +|\Psi|^2.
 $$
 We thus have a formula for $\tcl$ as follows
 $$
 \tcl=-\p_x^2+1- |q|^2= -\p_x^2+1-2\vp_0^2 - h V - h^2 G=L_- - h V -h^2 G.
 $$
 Plugging this in \eqref{190}, we obtain the following relation
 \begin{equation}
 \label{300}
 (L_- - h(V+i\al_0)-h^2 G)[(c+d h)\vp_0+h \Psi]=h(1+i).
 \end{equation}
 After some algebraic manipulations,  we obtain
  \begin{equation}
  \label{310}
  L_- \Psi-c(V+i \al_0)\vp_0-(1+i)-h[(V+i\al_0)(d \vp_0 +\Psi)+c \vp_0 G]=h^2 G(d, \Psi)\Psi.
  \end{equation}
 Similar to the derivation of \eqref{210}, we take \eqref{310} and its complex conjugate to obtain the following {\it non-linear} in $h$ system of equations
  \begin{eqnarray*}
  & & \left(\begin{array}{cc}
  -\p_x^2+1- 4\vp_0^2 & -c^2 \vp_0^2 \\
  -\bar{c}^2 \vp_0^2 & -\p_x^2+1- 4\vp_0^2
  \end{array}\right) \left(\begin{array}{c}
  \Psi \\ \bar{\Psi}
  \end{array}\right) =  \left(\begin{array}{c}
  1+i +i \al_0 c \vp_0 +2 \vp_0^3 c   \Re[c\bar{d}] \\  1-i -i \al_0 \bar{c} \vp_0 + 2 \vp_0^3 \bar{c}   \Re[c\bar{d}]
  \end{array}\right)+ \\
  &+& h \left(\begin{array}{c}
  	 (V+i\al_0)(d\vp_0 +\Psi)+c \vp_0 G \\
  	 (V-i\al_0)(\bar{d} \vp_0 +\bar{\Psi})+\bar{c}\vp_0 G
  \end{array}\right)+h^2 \left(\begin{array}{c}
   G(d, \Psi)\Psi \\G(d, \Psi)\bar{\Psi}
\end{array}\right)
  \end{eqnarray*}
 Diagonalizing yields the equivalent equations
  \begin{eqnarray}
  \label{315}
    L_+[\bar{c}\Psi+c\bar{\Psi}] &=&  4a_0 +8 \vp_0^3  \Re[c\bar{d}]+h E_1(h,d,\Psi),     \\
   \label{317}
 L_-[-\bar{c}\Psi+c\bar{\Psi}]  &=&    4i( b_0- \al_0 \vp_0) +
 \\
 \nonumber
 &+& 2 i h[ V\vp_0\Im[c\bar{d}]+V \Im[c\bar{\Psi}] - \al_0 \vp_0\Re[c \bar{d}]-\al_0 \Re[c\bar{\Psi}]]+ h^2 E_2,
  \end{eqnarray}
  where $E_1,E_2$ are smooth functions of the respective arguments.
 This is the system that we need to solve - that is, the goal is to find a neighborhood $(0,h_0)$, so that for every $h\in (0,h_0)$, there is a scalar function $d=d(h)$ and a function $\Psi=\Psi(h)\in \{\vp_0\}^\perp$, so that the pair satisfies the previous two relations.

 To that end, we shall use the implicit function theorem.  It is clear that it is more convenient to introduce two real  variables\footnote{recall that $c$ is already fixed in terms of $\al_0$, so finding $D_1, D_2$ is akin to finding the complex number  $d$} $D_1:=\Re[c\bar{d}], D_2:=\Im[c\bar{d}]$. Clearly, the system requires some solvability conditions. We have already established that with our choice of $c$, we have that $c-\bar{c} -i(c+\bar{c})-4 i \al_0 \vp_0\perp \vp_0$. So, from \eqref{317},  we need to require
 \begin{eqnarray*}
 	0 &=& \dpr{V\vp_0\Im(c\bar{d})+V \Im[c\bar{\Psi}]- \al_0 \vp_0\Re[c \bar{d}]-\al_0 \Re [c\bar{\Psi}]}{\vp_0}+O(h)= \\
 	&=&
 	\dpr{D_2 V\vp_0 - D_1 \al_0 \vp_0 + V \Im[c\bar{\Psi}]}{\vp_0}+O(h)
 \end{eqnarray*}
 In the last identity, we used that    $\Psi\perp \vp_0$, whence  by the reality of $\vp_0$, we have that $\bar{\Psi}\perp \vp_0$ as well (and thus  any linear combination of $\Psi, \bar{\Psi}$ is perpendicular to $\vp_0$).  Thus, we end up requiring
 \begin{equation}
 \label{320}
 \dpr{D_2 V\vp_0 - D_1 \al_0 \vp_0 + V \Im[c\bar{\Psi}]}{\vp_0}+O(h)=0
 \end{equation}
 Since  $\bar{c}\Psi+c\bar{\Psi}\perp \vp_0$  ,  we need to have
$$
 0=4 a_0 \dpr{L_+^{-1}[1]}{\vp_0} +8  D_1 \dpr{L_+^{-1}[\vp_0^3]}{1}+O(h).
 $$
 Recalling that $\dpr{L_+^{-1}[1]}{\vp_0}=0$ and   $L_+[\vp_0]=-4\vp_0^3$,  whence  $L_+^{-1}[\vp_0^3]=-\f{1}{4}\vp_0$ and the previous relation reads
 \begin{equation}
 \label{330}
-2D_1 \dpr{\vp_0}{1} +O(h)=0.
 \end{equation}
 The analysis so far allows us to solve the system\eqref{315}, \eqref{317}, for $h=0$.   Namely,  from \eqref{330}, we infer  that
 \begin{equation}
 \label{340}
 D_1^0=0
 \end{equation}
 The next step is to find $\Psi^0$, from \eqref{315}and \eqref{317}, at $h=0$. Inverting   $L_+$ in \eqref{315} and $L_-$ in \eqref{317} and taking the difference,  and taking into account that $\Re[c\bar{d}]=D_1^0+O(h)=O(h)$, we obtain\footnote{Recall that  $b_0-\al_0 \vp_0\perp \vp_0$, so taking $L_-^{-1}$  is justified. Similarly, with the definition of $D_1^0$ in \eqref{340}, taking $L_+^{-1}$ is justified as well.}
  \begin{equation}
 \label{350}
   \Psi^0 =  \f{4 a_0  L_+^{-1}[1]- 4 i L_-^{-1}[ b_0 - \al_0 \vp_0]}{2\bar{c}}= c a_0 L_+^{-1}[1] - i c L_-^{-1}[ b_0 - \al_0\vp_0].
 \end{equation}
 Note that $\Psi^0\perp \vp_0$ (as it should be), since $L_+^{-1}[1]\perp \vp_0$, and
 $Image[ L_-^{-1}]\perp \vp_0$.

  Finally, we use \eqref{320} to determine $D_2^0$. We obtain the formula
 \begin{equation}
 \label{c:14}
 D_2^0\dpr{V^0 \vp_0}{\vp_0}=-\dpr{\Im[c\bar{\Psi}_0]}{V^0 \vp_0}.
 \end{equation}
  We clearly need to compute $\Re[c\bar{\Psi}_0], \Im[c\bar{\Psi}_0]$. We have from \eqref{350},
  \begin{eqnarray*}
\Re[c\bar{\Psi}_0] &=&    2 a_0  L_+^{-1}[1],      \\
\Im[c\bar{\Psi}_0]&=&  2 L_-^{-1}[b_0-\al_0 \vp_0].
  \end{eqnarray*}
  According to its definition
  $$
  V^0=V(0, \Psi_0)=2  \vp_0^2 D_1^0+\vp_0[c \bar{\Psi^0}+\bar{c}\Psi^0]=2\vp_0\Re[c\bar{\Psi}_0]=
  2 a_0  \vp_0 L_+^{-1}[1].
  $$
  Consequently, since $L_+^{-1}[\vp_0^3]=-\f{1}{4} \vp_0$,
   \begin{equation}
   \label{c:27}
   	 \dpr{V^0 \vp_0}{\vp_0} = 4 a_0  \dpr{\vp_0^3}{L_+^{-1}[1]}= - a_0  \dpr{1}{\vp_0}.
   \end{equation}
   Finally,
     \begin{eqnarray*}
   \dpr{\Im[c\bar{\Psi}_0]}{V^0 \vp_0} = 8 a_0 \dpr{\vp_0^2 L_+^{-1}[1]}{L_-^{-1}[b_0-\al_0 \vp_0]}.
      \end{eqnarray*}
      From \eqref{c:14}, we deduce
      \begin{equation}
      \label{c:17}
       D_2^0=  8 \f{\dpr{\vp_0^2 L_+^{-1}[1]}{L_-^{-1}[b_0-\al_0 \vp_0]}}{\dpr{1}{\vp_0}}
      \end{equation}

 To recapitulate, we have determined, in  \eqref{350}, together with $D_1^0, D_2^0$ as determined above,
 the unique solutions of \eqref{315} and \eqref{317}, when $h=0$.
 We now setup the implicit function argument, which will work in a neighborhood of the solution $h=0$, $\Psi^0$, given by \eqref{350}, and $ D_1^0, D_2^0$.

 First, we set the solvability condition arising in \eqref{317}, namely the scalar function\footnote{Here we use again that $\dpr{ \Im[c\bar{\Psi}]}{\vp_0}=0$}
 \begin{eqnarray*}
& & Q_1(h; \Psi, D_1, D_2) = 2i\dpr{D_2  V(D_1, \Psi)\vp_0 +V \Im[c\bar{\Psi}] - \al_0 \vp_0 D_1 -\al_0 \Im[c\bar{\Psi}]}{\vp_0}+ \\
&+& h \dpr{E_2(h,\Psi, D_1, D_2)}{\vp_0}  = 2i\dpr{D_2  V(D_1, \Psi)\vp_0 + V(D_1, \Psi) \Im[c\bar{\Psi}]  - \al_0 \vp_0D_1 }{\vp_0}+\\
&+& h \dpr{E_2(h,\Psi, D_1, D_2)}{\vp_0},
 \end{eqnarray*}
 where $V(D_1, \Psi)=2 D_1 \vp_0^2 + \vp_0(\bar{c} \Psi+c \bar{\Psi})$. The other function is constructed as follows - apply $L_+^{-1}$ in \eqref{315} and $L_-^{-1}$ in \eqref{317} (once we make  sure that the right hand side is orthogonal to $\vp_0$).  After subtracting and simplifying,
 \begin{eqnarray*}
& & Q_2(h; \Psi, D_1, D_2) =  2 \bar{c} \Psi -\left[ 4 a_0 L_+^{-1}[1] - 2  D_1 \vp_0    +h L_+^{-1}[E_1(h; \Psi, D_1,D_2)]  \right]
+4i L_-^{-1}[b_0 -  \al_0 \vp_0] \\
& & + L_-^{-1}[P_{\{\vp_0\}^\perp}[2 i h(D_2 V(D_1, \Psi) \vp_0 -D_1 \al_0\vp_0+V(D_1, \Psi) \Im[c\bar{\Psi}]-\al_0 \Im[c\bar{\Psi}])+h^2 E_2(h; \Psi, D_1,D_2) ]].
 \end{eqnarray*}
 Note that the projection $P_{\{\vp_0\}^\perp}$ becomes irrelevant, once we impose the condition \\
 $Q_1(h; \Psi, D_1, D_2)=0$! On the other hand, we need it in the definition of $Q_2$ to keep it well-defined, even when $Q_1(h; \Psi, D_1, D_2)=0$ is not enforced. We now consider
 $$
 (Q_1, Q_2)(h; \Psi, D_1, D_2): \rone\times \{\vp_0\}^\perp \times \rone\times \rone \to \rone\times L^2_{per.}[-T,T]
 $$
 and we would like to solve
\begin{equation}
\label{485}
 \left|
 \begin{array}{c}
 Q_1(h; \Psi, D_1, D_2)=0 \\
 Q_2(h; \Psi, D_1, D_2)=0.
 \end{array}
 \right.
\end{equation}
 Note that if one obtains solutions to \eqref{485}, the projection $P_{\{\vp_0\}^\perp}$ becomes irrelevant and the system  $Q_1=Q_2=0$ becomes equivalent to the system \eqref{315}and \eqref{317}.
 Observe that by our earlier considerations, for $h=0$, we have a solution, that is
 $$
 \left|
 \begin{array}{c}
 Q_1(0; \Psi^0, 0, D_2^0)=0 \\
 Q_2(0; \Psi^0, 0, D_2^0)=0,
 \end{array}
 \right.
 $$
 where $Q_2^0$ is given in \eqref{c:17}.
 Our construction of the family $\Psi(h), D_1(h), D_2(h)$ in a neighborhood $(0,h_0)$ will follow, once we can verify that
 $$
 d(Q_1, Q_2)(0; \Psi^0, 0, D_2^0)[\cdot, \cdot, \cdot]:  \{\vp_0\}^\perp \times \rone\times \rone \to \rone\times L^2_{per.}[-T,T]
 $$
 is an isomorphism. That is, for every $\chi\in L^2_{per.}[-T,T]$ and $z\in \rone$, there must
 be unique solution $\psi\in \{\vp_0\}^\perp, d_1 \in \rone, d_2\in \rone$  of the linear system
 \begin{eqnarray*}
 & & dQ_1(0; \Psi^0, 0, D_2^0)[\psi, d_1, d_2]=z \\
 & & dQ_2(0; \Psi^0, 0, D_2^0)[\psi, d_1, d_2]=\chi
 \end{eqnarray*}
 so that the linear mapping $(\chi, z)\to (\psi(\chi, z), d_1(\chi, z), d_2(\chi, z))$ is continuous.

 First, we compute $dQ_2(0; \Psi^0, 0, D_2^0)[\psi, d_1, d_2]= 2\bar{c} \psi + 2 d_1 \vp_0$.
 In order to prepare the calculation for $ dQ_1(0; \Psi^0,0, D_2^0)[\psi, d_1, d_2]$, observe that
 \begin{eqnarray*}
V(D_1, \Psi) &=& 2 D_1 \vp_0^2+2\vp_0 \Re[c\bar{\Psi}] \\
dV(0,\Psi^0)(d_1, \psi)  &=& 2 d_1 \vp_0^2+2\vp_0 \Re[c\bar{\psi}]
 \end{eqnarray*}
 Consequently,
  \begin{eqnarray*}
  	 & & dQ_1(0; \Psi^0,0, D_2^0)[\psi, d_1, d_2] =  2i\dpr{d_2 V^0\vp_0+ D_2^0(2 d_1\vp_0^2+2\vp_0\Re[c\bar{\psi}])}{\vp_0}+ \\
  	 &+& 2i\dpr{(2 d_1 \vp_0^2+2\vp_0\Re[c\bar{\psi}]) \Im[c\bar{\Psi}_0]}{\vp_0} +2i \dpr{V^0 \Im[c\bar{\psi}]}{\vp_0} - 2i \al_0 d_1 \|\vp_0\|^2.
  \end{eqnarray*}
 Now, the equation $\chi=dQ_2(0; \Psi^0, 0, D_2^0)[\psi, d_1, d_2]$, has the form
 \begin{eqnarray*}
 & &\chi=  dQ_2(0; \Psi^0, 0, D_2^0)[\psi, d_1, d_2]= 2\bar{c} \psi + 2 d_1 \vp_0.
 \end{eqnarray*}
  It clearly has the unique solution
  $$
  d_1=\f{\dpr{\chi}{\vp_0}}{2\|\vp_0\|^2}, \psi=\f{1}{2\bar{c}}(\chi- 2d_1 \vp_0)\in \{\vp_0\}^\perp
  $$
   Plugging the expressions for $d_1$ and $\psi$ in the   equation $dQ_1=z$  produces a linear equation for $d_2$, once we take a dot product with $\vp_0$.  More precisely,
   \begin{eqnarray*}
  d_2 \dpr{V^0\vp_0}{\vp_0} &=&   \f{z}{2i} -\dpr{D_2^0(2 d_1\vp_0^2+2\vp_0\Re[c\bar{\psi}])}{\vp_0} -  \dpr{(2 d_1 \vp_0^2+2\vp_0\Re[c\bar{\psi}]) \Im[c\bar{\Psi}_0]}{\vp_0} +\\
  &+& \al_0 d_1 \|\vp_0\|^2 - \dpr{V^0 \Im[c\bar{\psi}]}{\vp_0}
   \end{eqnarray*}
   which has also unique solution, provided the coefficient in front of it, $\dpr{V^0 \vp_0}{\vp_0}\neq 0$  is non-zero. But we have already verfied that, see \eqref{c:27}. We also see that the solution $d_2$ depends linearly,  through a nice formula on $\chi, z$. It follows that the mapping $(\chi, z)\to (\psi, d_1, d_2)$ is indeed an isomorphism, in the sense specified above.  The implicit function theorem thus applies and we have constructed the solutions.

   It remains to verify the  formulas \eqref{500}and \eqref{510}. First, we have that $c\bar{d}=D_1^0+i D_2^0+O(h)=i D_2^0+O(h)$, whence
\begin{equation}
\label{c:30}
d^0= - \f{i c}{2} D_2^0=\f{a_0+b_0}{2} D_2^0 +i\f{b_0-a_0}{2} D_2^0.
\end{equation}
   Thus, starting with the relation $\vp_{\al,1}-\vp_{\al,2}+i (\vp_{\al,1}-\vp_{\al,2})=q=(c+d h)\vp_0 +h \Psi$, we deduce
    \begin{eqnarray*}
     \vp_{\al,1} &=& (a_0+\f{b_0}{2} h D_2^0)\vp_0 +h(a_0^2 L_+^{-1}[1]+b_0 L_-^{-1}[b_0-\al_0 \vp_0]) +O(h^2)\\
     \vp_{\al,2} &=&(b_0-\f{a_0}{2} h D_2^0)\vp_0 +h(a_0 b_0 L_+^{-1}[1]-a_0 L_-^{-1}[b_0-\al_0 \vp_0]) +O(h^2),
    \end{eqnarray*}
 which is the final claim in Theorem \ref{102}.

 \section{Stability analysis for the waves}
  The linearization of \eqref{1.1} around the solution $\vp_\al$ from Theorem \ref{102} is constructed as follows.  Set $u=\vp_\al+v, \vp_\al=\vp_{\al,1}+i \vp_{\al,2}, v=v_1+i v_2$. After ignoring $O(|v|^2)$ terms (and keeping in mind that $\al=\al_0 h$), we obtain the following system
  $$
  \left(\begin{array}{c}
 -\p_t v_2 \\ \p_t v_1
 \end{array}
  \right)=\left(\begin{array}{cc}
 -\p_x^2+1-(6\vp_{\al,1}^2+2\vp_{\al,2}^2) & -4 \vp_{\al,1} \vp_{\al,2}    \\
 -4 \vp_{\al,1} \vp_{\al,2}  &  -\p_x^2+1-(2\vp_{\al,1}^2 + 6\vp_{\al,2}^2)
  \end{array}
  \right)\left(\begin{array}{c}
  v_1 \\ v_2
  \end{array}
  \right)+\al  \left(\begin{array}{c}
    v_2 \\ -v_1
  \end{array}
  \right)
  $$
  Introduce the self-adjoint operator (with domain $H^2(\rone)\times H^2(\rone)$)
  $$
  \cl_h:=\left(\begin{array}{cc}
  -\p_x^2+1-(6\vp_{\al,1}^2+2\vp_{\al,2}^2) & -4 \vp_{\al,1} \vp_{\al,2}    \\
  -4 \vp_{\al,1} \vp_{\al,2}  &  -\p_x^2+1-(2\vp_{\al,1}^2 + 6\vp_{\al,2}^2)
  \end{array}
  \right).
  $$
  In the eigenvalue ansatz, $v_j(t,\cdot)\to e^{\la t} z_j(\cdot)$, the problem becomes
  \begin{equation}
  \label{540}
 \cj \cl_h
 \left(\begin{array}{c}
   z_1 \\ z_2
   \end{array}
   \right) =  (\la+\al)  \left(\begin{array}{c}
   z_1 \\ z_2
   \end{array}
   \right).
  \end{equation}
Introducing $\mu:=\la+\al$ ,  note that \eqref{540} is a Hamiltonian eigenvalue problem in the form $\cj \cl_h \vec{z}=\mu \vec{z}$,
 enjoying all the symmetries that are afforded by the Hamiltonian structure. Let us record it as
 \begin{equation}
 \label{550}
 \cj \cl_h
 \left(\begin{array}{c}
 z_1 \\ z_2
 \end{array}
 \right) =   \mu   \left(\begin{array}{c}
 z_1 \\ z_2
 \end{array}
 \right).
 \end{equation}
 In addition, $\la=0$ and $z_1=\vp_{\al,1}', z_2=\vp_{\al,2}'$ is an eigenvalue (of algebraic multiplicity two) for \eqref{540}, in accordance with the translational invariance of the system \eqref{1.1}.
 {\it Our task here is a bit unusual in that we need to make a good distinction between \eqref{540} and \eqref{550}}.  More precisely, our goal is to find conditions (or actually characterize) the waves that are stable, or equivalently we need to ensure that the eigenvalue problem \eqref{540}, $\la$, satisfy $\Re \la\leq 0$.  In terms of $\mu$, the stability is equivalent to $\Re\mu\leq \al$.
 Here and below, we use the instability index theory developed for eigenvalue problems in the form \eqref{550}, which among other things counts eigenvalues with positive real parts for \eqref{550}. Let us reiterate again, that the existence of those does not necessarily mean instability for \eqref{540}, unless $\Re\mu>\al=\al_0 h$. In fact, we have already one ``instability'' for \eqref{550}, namely an eigenvalue $\mu=\al$, with e-vector $\left(\begin{array}{c} \vp_{\al,1}' \\ \vp_{\al,2}'\end{array} \right)$.

 To this end, we need to track the evolution of the eigenvalues at zero, as we turn on $h$. Before we get on with this task, we need a few preparatory calculations. We  compute
 $
 \W:= \left(\begin{array}{cc}
 6\vp_{\al,1}^2+2\vp_{\al,2}^2 & 4 \vp_{\al,1} \vp_{\al,2}    \\
 4 \vp_{\al,1} \vp_{\al,2}  &   2\vp_{\al,1}^2 + 6\vp_{\al,2}^2
 \end{array}
 \right)
 $
 in powers of $h$. We have
 \begin{eqnarray*}
 	\W &=&  \vp_0^2 \left(\begin{array}{cc}
 		2+4 a_0^2 	  &   4 a_0 b_0    \\
 		4 a_0 b_0     &   2+4 b_0^2
 	\end{array}
 	\right)+ \\
 	&+& 2 h \vp_0 \left(\begin{array}{cc}
 		6a_0 \Psi_1^0 + 2 b_0\Psi_2^0  +2 a_0 b_0 D_2^0 &  2 a_0 \Psi_2^0+2 b_0  \Psi_1^0+(b_0^2-a_0^2)D_2^0      \\
 		2 a_0 \Psi_2^0+2 b_0  \Psi_1^0   +(b_0^2-a_0^2)D_2^0     &    	2 a_0 \Psi_1^0 + 6 b_0\Psi_2^0 -2 a_0 b_0 D_2^0
 	\end{array}
 	\right)+O(h^2).
 \end{eqnarray*}
 Diagonalizing the matrix $\left(\begin{array}{cc}
 2+4 a_0^2 	  &   4 a_0 b_0    \\
 4 a_0 b_0     &   2+4 b_0^2
 \end{array}
 \right)=S \left(\begin{array}{cc}
 6  	  &   0 \\
 0    &  2
 \end{array}
 \right) S^{-1} $, with
 $$
 S=\left(\begin{array}{cc}
 a_0 	  &   -b_0 \\
 b_0   &  a_0
 \end{array}
 \right),\ \  S^{-1}=\left(\begin{array}{cc}
 a_0  & b_0  \\
 -b_0 &  a_0
 \end{array}
 \right)
 $$
 leads to the representation
 $$
  	\W =      S \left[\vp_0^2 \left(\begin{array}{cc}
 		6  	  &   0 \\
 		0    &  2
 	\end{array}
 	\right)+ 2 h \vp_0 [\Psi_1^0  \left(\begin{array}{cc}
 		6a_0	& -2b_0 \\
 		-2b_0  & 2 a_0
 	\end{array}
 	\right) +\Psi_2^0   \left(\begin{array}{cc}
 		6 b_0 &  2 a_0   \\
 		2a_0   & 2b_0
 	\end{array}
 	\right)-D_2^0\left( \begin{array}{cc}
 		0 & 1 \\
 		1 & 0
 	\end{array} \right)] \right]S^{-1}+O(h^2).
$$

 Upon the introduction of the new variables $\left(\begin{array}{c}
 Z_1 \\ Z_2
 \end{array}\right)= S^{-1}  \left(\begin{array}{c}
 z_1 \\ z_2
 \end{array}\right)$, and since $S^{-1} \cj S=\cj$, we can rewrite the eigenvalue problem \eqref{550} in the form
 \begin{eqnarray}
 \label{570}
 \cj \cm_h \left(\begin{array}{c}
 Z_1 \\ Z_2
 \end{array}\right) =\mu \left(\begin{array}{c}
 Z_1 \\ Z_2
 \end{array}\right),
 \end{eqnarray}
 where
 $$
 \cm_h=    \left(\begin{array}{cc}
 L_+ 	  &   0 \\
 0    &  L_-
 \end{array}
 \right) - 2 h \vp_0 [\Psi_1^0  \left(\begin{array}{cc}
 6a_0	& -2b_0 \\
 -2b_0  & 2 a_0
 \end{array}
 \right) +\Psi_2^0   \left(\begin{array}{cc}
 6 b_0 &  2 a_0   \\
 2a_0   & 2b_0
 \end{array}
 \right)-D_2^0\left( \begin{array}{cc}
 0 & 1 \\
 1 & 0
 \end{array} \right)]+O(h^2).
 $$
 This form of the eigenvalue problem is more suggestive of our approach. For $h=0$, we have two dimensional $Ker[\cm_0]$, spanned by the vectors\footnote{Both vectors have one additional generalized eigenvector, so an algebraic multiplicity four at zero}   $\left(\begin{array}{c}
 \vp_0'\\ 0
 \end{array}\right)$ and $\left(\begin{array}{c}
 0 \\ \vp_0
 \end{array}\right)$.
We need to see what the evolution of the  modulational eigenvalue as $h: 0<h<<1$, i.e. the one corresponding to the eigenvector $\left(\begin{array}{c}
0 \\ \vp_0
\end{array}\right)$. This is because, by index counting theory, the instability can only appear in the even subspace of the problem. Also, we can clearly  consider $\cm_h$ instead of $\cl_h$ as the two operators are similar through the matrix $S$.

    \subsection{Tracking the modulational  eigenvalue for  $\cl_h$ $0<h<<1$}
  We set up the following ansatz for the eigenvalue problem for $\cm_h$,
  \begin{equation}
  \label{b:70}
  \cm_h  \left(\begin{array}{c}
   h p_1  \\ \vp_0 + h p_2
  \end{array}\right)=\si h  \left(\begin{array}{c}
   h p_1  \\ \vp_0+ h p_2
  \end{array}\right).
  \end{equation}
    Using the precise form of $\cm_h$, to the leading order $h$, we have
    \begin{eqnarray*}
& & L_+ p_1 + 2 \vp_0^2 (2 b_0 \Psi_1^0-2 a_0 \Psi_2^0-D_2^0)=0 \\
& & L_- p_2 -4\vp_0^2(a_0\Psi_1^0+b_0 \Psi_2^0)=\si \vp_0
    \end{eqnarray*}
    The first equation is resolvable, since $ 2 \vp_0^2 (2 b_0 \Psi_1^0-2 a_0 \Psi_2^0-D_2^0)$ is even and hence perpendicular to $Ker[L_+]=span[\vp_0']$. The solvability condition for the second one, $\si \vp_0 + 4\vp_0^2(a_0\Psi_1^0+b_0 \Psi_2^0)\perp \vp_0$,  is what yields the formula for $\si_0$, namely
    \begin{equation}
    \label{b:90}
    \si_0=-4\f{\dpr{\vp_0^2(a_0\Psi_1^0+b_0 \Psi_2^0)}{\vp_0}}{\|\vp_0\|^2}= -4 a_0\f{\dpr{L_+^{-1}[1]}{\vp_0^3}}{\|\vp_0\|^2}=a_0\f{\dpr{1}{\vp_0}}{\|\vp_0\|^2}.
    \end{equation}

\subsection{Tracking the modulational eigenvalue for $\cj \cl_h$ $0<h<<1$}

Taking cues from the proof of Proposition \ref{prop:11}, we take the following ansatz for the (former) modulation eigenvalue at zero and the corresponding eigenvector $\left(\begin{array}{c}
0 \\ \vp_0 \end{array}\right)$ - take $\mu=\mu_0 \sqrt{h}$ and
$\left(\begin{array}{c}
Z_1 \\ Z_2
\end{array}\right)=\left(\begin{array}{c}
\sqrt{h} q_1  \\ \vp_0+h q_2
\end{array}\right)$.

Plugging this in \eqref{570}, we obtain, after some elementary algebraic manipulations,
\begin{eqnarray*}
 \left(\begin{array}{cc}
L_+ +O(h)  &   O(h)  \\
O(h)    &  L_-  - 4 h \vp_0 (a_0 \Psi_1^0+b_0 \Psi_2^0)
\end{array}
\right)  \left(\begin{array}{c}
 \sqrt{h} q_1  \\ \vp_0+h q_2
\end{array}\right)=\mu_0 \sqrt{h}
\left(\begin{array}{c} -\vp_0-h q_2 \\
	\sqrt{h} q_1
	\end{array} \right)
\end{eqnarray*}
Resolving the first equation, to its leading order $\sqrt{h}$,  yields the relation
$
L_+ q_1 = -\mu_0 \vp_0,
$
or, since $L_+$ is invertible on $\vp_0$,
\begin{equation}
\label{598}
q_1=-\mu_0 L_+^{-1}[\vp_0]+O(\sqrt{h}).
\end{equation}
In the second equation, the leading order is $h$, whence we get the equation
\begin{equation}
\label{600}
L_- q_2 - 4  \vp_0^2(a_0 \Psi_1^0+b_0 \Psi_2^0) = \mu_0 q_1= -\mu_0^2  L_+^{-1}[\vp_0].
\end{equation}
This equation is solvable, provided we ensure $4  \vp_0^2(a_0 \Psi_1^0+b_0 \Psi_2^0)  - \mu_0^2  L_+^{-1}[\vp_0]\perp \vp_0$, as the operator $L_-$ becomes invertible on it, whence
$$
q_2=L_-^{-1}[4  \vp_0^2(a_0 \Psi_1^0+b_0 \Psi_2^0)  - \mu_0^2  L_+^{-1}[\vp_0]].
$$
Thus, we have located the former modulational invariance eigenvalue - namely it is $\mu_0 \sqrt{h}$,
where $\mu_0$ ensures $4  \vp_0^2(a_0 \Psi_1^0+b_0 \Psi_2^0)  - \mu_0^2  L_+^{-1}[\vp_0]\perp \vp_0$.
Equivalently
\begin{equation}
\label{610}
\mu_0^2 = 4 \f{\dpr{\vp_0^2(a_0 \Psi_1^0+b_0 \Psi_2^0)}{\vp_0}}{\dpr{L_+^{-1}\vp_0}{\vp_0}}.
\end{equation}
It remains to compute this last expression. We have
\begin{eqnarray*}
\mu_0^2 &=& 4\f{\dpr{\vp_0^3}{a_0 \Psi_1^0+b_0 \Psi_2^0}}{\dpr{L_+^{-1}\vp_0}{\vp_0}}=
4 \f{a_0}{ \dpr{L_+^{-1}\vp_0}{\vp_0}} \dpr{\vp_0^3}{L_+^{-1}[1]}=  -a_0  \f{\dpr{\vp_0}{1}}{\dpr{L_+^{-1}\vp_0}{\vp_0}}
\end{eqnarray*}
Since $\dpr{L_+^{-1}\vp_0}{\vp_0}<0$, we have that $sgn(\mu_0^2)=sgn(a_0)$. In other words, if $a_0>0$, we have instability, while for $a_0<0$, we have a marginally stable pairs of eigenvalues
$\pm i [\sqrt  \f{-a_0 \dpr{\vp_0}{1}}{\dpr{L_+^{-1}\vp_0}{\vp_0}} \sqrt{h}+O(h)]$

\subsection{Stable and unstable eigenvalues: putting it together }
Before we proceed with our rigorous arguments, let us discuss our findings so far. In the case
$a_0>0$ (or equivalently $\si_0>0$), we have an unstable eigenvalue in the form $\mu_0 \sqrt{h}+O(h)$, where $\mu_0$ is real and
determined from \eqref{610}. The case where $\mu_0$ is purely imaginary
 is more complicated and it needs extra arguments, based on our earlier computations of the sign of the eigenvalues and the index theory.

 Henceforth,  assume $a_0<0$. Also, it is clear that both the even and odd subspaces are invariant under the action of $\cl_h$ and $\cj \cl_h$, so we will consider them separately.
 \subsubsection{Spectral analysis on the even subspace}  In this case, we have established the emergence,
  from the modulational eigenvalue at $h=0$ (which has algebraic multiplicity two!),  of  a pair of marginally stable eigenvalues
  $i [\pm \mu_0  \sqrt{h}+O(h)]$.
  We now compute the Krein index of  the marginally stable pair  of eigenvalues\footnote{which is of course relevant computation, only if $b_0>a_0$.} $ \pm i \sqrt{\mu_0} \sqrt{h} +O(h)$    For a simple pair of eigenvalues, the Krein index coincides with the sign of the expression
  $\dpr{  \Re \left(\begin{array}{c}
  \sqrt{h} q_1  \\ \vp_0+h q_2
  \end{array}\right)}{ \cm_h \Re \left(\begin{array}{c}
  	\sqrt{h} q_1  \\ \vp_0+h q_2
  	\end{array}\right)}$, see \cite{KKS}, p. 267. To this end,  realizing from \eqref{598} that $q_1$ is purely imaginary to the leading order (in the case under consideration),
  \begin{eqnarray*}
  & &  \dpr{  \Re \left(\begin{array}{c}
   		\sqrt{h} q_1  \\ \vp_0+h q_2
   	\end{array}\right)}{ \cm_h \Re \left(\begin{array}{c}
   	\sqrt{h} q_1  \\ \vp_0+h q_2
   \end{array}\right)}  =   -4h \dpr{\vp_0}{\vp_0^2(a_0 \Psi_1^0+b_0\Psi_2^0)}+O(h^{3/2})= \\
  &=&
-4 a_0 h \dpr{\vp_0^3}{L_+^{-1}[1]}+O(h^{3/2})= a_0 h  \dpr{\vp_0}{1}+O(h^{3/2}).
\end{eqnarray*}
  It follows that for $a_0<0$,  the problem has a pair of eigenvalues $ \pm i \sqrt{\mu_0} \sqrt{h} +O(h)$  with a negative Krein signature.

  Recall that $n(\cl_0)=1$, which arises in the even subspace.  Moreover, $dim(Ker[\cl_0])=2$, but recall that one of them occurs in the even subspace, the other one in the odd subspace. On the other hand\footnote{For a self-adjoint, bounded from below operator $M$, acting invariantly on the even subspace, we denote $n_{even}(M)$ the number of negative eigenvalues, when acting on the even subspace}
  $n_{even}(\cl_h)$ could assume the values of
 one  or two - $n_{even}(\cl_h)=1$,   if the translational eigenvalue \eqref{b:50} is positive or  $n_{even}(\cl_h)=2$, if the   modulational eigenvalue \eqref{b:50} is negative. Indeed, $\cl_{0, even}$ has one negative eigenvalue, according to Proposition \ref{prop:h0} - which remains negative, after the perturbation. In addition, the modulational eigenvalue at zero has moved (for $a_0<0$) to the left, see \eqref{b:90}, creating a second negative eigenvalue.

  We claim that
 $n(\cl_h)=2$, at least for small $0<h<<1$.  Indeed, since by the index counting formulas
 $$
   0\leq n_{unstable, even}(\cj \cl_h)\leq n(\cl_h)-\# \{\la\in i \rone: \la \ \ \textup{has negative Krein signature}  \}\leq
 n_{even}(\cl_h)-2\leq 0.
  $$
since we have already verified that there is at least one  pair of purely imaginary eigenvalues, $\pm i \sqrt{\mu_0} \sqrt{h} +O(h)$  with negative Krein signatures (and even eigenfunctions!). Thus, we obtain equalities everywhere in the inequalities above, in particular $n_{unstable, even }(\cj \cl_h)=0$, \\ $n(\cl_{h, even})=2$.

In other words,      the  modulational  eigenvalue has moved to the left (for $a_0<0$), creating  a total of
two negative eigenvalue for $\cl_h$  - one that existed for $\cl_0$ (and it is still there, after the perturbation), to which another one is added.  These  are subsequently offset by two purely imaginary eigenvalues, with negative Krein signatures.

\subsubsection{Spectral analysis on the odd subspace}

For the odd subspace, the index count is somewhat simpler.
Recall  there is the ``unstable'' eigenvalue at $\mu=\al=\al_0 h$ for the eigenvalue problem \eqref{570}, which
corresponds to the translational invariance,  see \eqref{550} and the discussion immediately after. So, this implies that $n_{unstable, odd}(\cj \cl_h)\geq 1$. On the other hand, $n_{odd}(L_+)\leq 1$, since the translational eigenvalue has moved either to the left (in which case $n_{odd}(L_+)=1$), or to the right, and then\footnote{See the Appendix for a calculation involving the direction of the move, which turns out to be  inconclusive to first order}  $n_{odd}(L_+)=0$. Applying the index counting theory, we have that on the other hand $n_{odd}(L_+)\geq n_{unstable,odd}(\cj \cl_h)$, whence $1=n_{unstable, odd}(\cj \cl_h)=n_{odd}(L_+)$. In particular, the translational eigenvalue has moved to the left to become a negative eigenvalue  for $\cl_{+,h}$. In addition, the ``unstable eigenvalue'' is exactly the one computed explicitly, namely $\mu=\al$ and there are no other unstable eigenvalues in the odd subspace. Note that by the Hamiltonian symmetry the eigenvalues of \eqref{570}are symmetric with respect to the imaginary axes, so there is another, stable  one at $\mu=-\al$. We mention that there could be (and there is!) a lot of (point) spectrum on the imaginary axes as well.

In terms of the  original spectral
variables, by combining the conclusions for the even and odd subspaces, we obtain
$$
\la=\mu-\al\subset \left\{\{\al\}\cup\{-\al\} \cup \{\la: \Re \la=0\} \right\} - \al =   \{0\}\cup\{-2\al\} \cup \{\la: \Re \la=-\al\}.
$$
  This is exactly the statement of Theorem  \ref{105} and the proof is completed.
\appendix
   \section{Tracking the translational eigenvalue for  $\cl_h$ $0<h<<1$}
We set up the following ansatz for the eigenvalue problem for $\cm_h$,
   \begin{equation}
   \label{b:30}
   \cm_h  \left(\begin{array}{c}
   \vp_0'+h p_1  \\ h p_2
   \end{array}\right)=\si h  \left(\begin{array}{c}
   \vp_0'+h p_1  \\ h p_2
   \end{array}\right).
   \end{equation}
   In other words,
   \begin{equation}
   \label{b:40}
   \left(\begin{array}{cc}
   L_+ - 12 h \vp_0 (a_0 \Psi_1^0+b_0 \Psi_2^0) & O(h)  \\
   2h \vp_0(2 b_0\Psi_1^0-2 a_0 \Psi_2^0-D_2^0)& L_-+O(h)
   \end{array}\right)   \left(\begin{array}{c}
   \vp_0'+h p_1  \\ h p_2
   \end{array}\right)=\si h  \left(\begin{array}{c}
   \vp_0'  +h p_1  \\ h p_2
   \end{array}\right).
   \end{equation}
   To the leading order $h$, we have the equations
   \begin{eqnarray*}
   	& & 	L_+ p_1 - 12 \vp_0 (a_0 \Psi_1^0+b_0 \Psi_2^0) \vp_0'=\si \vp_0' \\
   	& & L_- p_2 + 2 \vp_0(2 b_0\Psi_1^0 - 2 a_0 \Psi_2^0-D_2^0) \vp_0'=0
   \end{eqnarray*}
   The second equation  always has a solution as $2 \vp_0(2 b_0\Psi_1^0 - 2 a_0 \Psi_2^0-D_2^0) \vp_0'$ is an odd function, so it is perpendicular to $Ker[L_-]=span[\vp_0]$.

   The first equation requires  the solvability condition $\si \vp_0' + 12 \vp_0 (a_0 \Psi_1^0+b_0 \Psi_2^0) \vp_0'\perp \vp_0'$. Noting that $a_0 \Psi_1^0+b_0 \Psi_2^0=a_0 L_+^{-1}[1]$,   we derive the formula for $\si_0$,
   \begin{equation}
   \label{b:50}
   \si_0=-12 \f{\dpr{\vp_0 \vp_0'(a_0 \Psi_1^0+b_0 \Psi_2^0)}{\vp_0'}}{\|\vp_0'\|^2}=
   -12 \f{a_0}{\|\vp_0'\|^2} \dpr{ L_{+}^{-1}[1]}{\vp_0 (\vp_0')^2}.
   \end{equation}
   This quantity can be computed explicitly, in fact  $\dpr{ L_{+}^{-1}[1]}{\vp_0 (\vp_0')^2}=0$, see Proposition \ref{antany} below. Our analysis is not precise enough\footnote{we do not have the precise asymptotic expressions of order  $O(h^2)$ above, although this is in principle  possible, after heavy computations.} to compute explicitly the next order, $O(h^2)$ term.

   All this means is that the translational eigenvalue for $\cl_{+,h}$  is zero to first order in $h$. As we have shown above, with index counting calculations, it turns out that this eigenvalue must have moved to the left (of order $O(h^2)$) to become a negative one.

   \section{Computation of the relevant quantities involvinf $L_+^{-1}$}

   \begin{proposition}
   	\label{antany}
   In the setup of Proposition \ref{prop:h0}, we have the following formulas
   \begin{eqnarray}
   \label{ant1}
   & & \dpr{L_+^{-1} \vp_0}{\vp_0} <0 \\
   \label{ant2}
 & &   \dpr{L_+^{-1}[1]}{\vp_0}=0, \\
 & & \label{ant3}
 \dpr{ L_{+}^{-1}[1]}{\vp_0 (\vp_0')^2}=0 \\
   \label{ant4}    	
   & & 	\int_0^T \f{(2-6\vp_0^2(y))((\vp_0'(y))^2-(\vp_0''(y))^2)}{((\vp_0'(y))^2+(\vp_0''(y))^2))^2} dy\neq 0
\end{eqnarray}
   \end{proposition}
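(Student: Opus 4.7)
The plan is to extract explicit representations of $L_+^{-1}$ on the relevant vectors, using algebraic identities that follow from the profile equation $\vp_0''=\vp_0-2\vp_0^3$ and its first integral $(\vp_0')^2=-\vp_0^4+\vp_0^2-c$. A direct calculation yields the four identities
\begin{equation*}
L_+[1]=1-6\vp_0^2,\quad L_+[\vp_0^2]=-3\vp_0^2+2c,\quad L_+[\vp_0]=-4\vp_0^3,\quad L_+[\vp_0^3]=6\vp_0^5-8\vp_0^3+6c\vp_0,
\end{equation*}
and integrating the profile equation once over $[-T,T]$ (using $\int_{-T}^T\vp_0''\,dx=0$ by periodicity) produces the key scalar identity $\int_{-T}^T\vp_0\,dx=2\int_{-T}^T\vp_0^3\,dx$.

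From the first two identities above, solving a $2\times 2$ linear system in the ansatz $A+B\vp_0^2$ gives $L_+^{-1}[1]=(1-2\vp_0^2)/(1-4c)$, automatically orthogonal to $\mathrm{Ker}(L_+)=\mathrm{span}(\vp_0')$ by evenness. Then \eqref{ant2} falls out immediately: $\dpr{L_+^{-1}[1]}{\vp_0}=(1-4c)^{-1}\int(\vp_0-2\vp_0^3)\,dx=0$. For \eqref{ant3}, use $\vp_0(\vp_0')^2=-\vp_0^5+\vp_0^3-c\vp_0$ and match coefficients against the third and fourth identities above to obtain $L_+^{-1}[\vp_0(\vp_0')^2]=-\tfrac{1}{6}\vp_0^3+\tfrac{1}{12}\vp_0$; self-adjointness of $L_+$ together with the key identity then gives
\begin{equation*}
\dpr{L_+^{-1}[1]}{\vp_0(\vp_0')^2}=\dpr{1}{L_+^{-1}[\vp_0(\vp_0')^2]}=-\tfrac{1}{6}\int\vp_0^3+\tfrac{1}{12}\int\vp_0=0.
\end{equation*}

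The algebraic method stalls for \eqref{ant1}, since $\vp_0$ does not lie in the $L_+$-image of any polynomial in $\vp_0$. Here I would invoke the Vakhitov--Kolokolov framework: introduce the family $\vp_\om$ of cnoidal waves solving $-\vp''+\om\vp-2\vp^3=0$ with the period kept fixed at $2T$ (so the elliptic modulus varies with $\om$). Differentiating the equation at $\om=1$ gives $L_+[\p_\om\vp_\om|_{\om=1}]=-\vp_0$, and since $\p_\om\vp_\om$ is even and periodic it realises $L_+^{-1}[\vp_0]$ up to the odd kernel direction. Consequently
\begin{equation*}
\dpr{L_+^{-1}[\vp_0]}{\vp_0}=-\tfrac{1}{2}\,\p_\om\|\vp_\om\|^2\big|_{\om=1},
\end{equation*}
and the sign question becomes a classical VK calculation: expressing $\|\vp_\om\|^2$ through the complete elliptic integrals $K(k),E(k)$ (with $k=k(\om)$ determined by the period constraint) and verifying $\p_\om\|\vp_\om\|^2>0$ then yields \eqref{ant1}.

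The main obstacle is \eqref{ant4}, which I would attack by an explicit elliptic-integral evaluation. Substituting $\vp_0''=\vp_0(1-2\vp_0^2)$ and $(\vp_0')^2=-\vp_0^4+\vp_0^2-c$ rewrites the integrand as a rational function of $u:=\vp_0^2$,
\begin{equation*}
\f{(2-6\vp_0^2)((\vp_0')^2-(\vp_0'')^2)}{((\vp_0')^2+(\vp_0'')^2)^2}=\f{(2-6u)(-4u^3+3u^2-c)}{(4u^3-5u^2+2u-c)^2},
\end{equation*}
and the substitution $u=\vp_0^2$ (with $dx=du/(2\sqrt{u}\sqrt{-u^2+u-c})$) converts the $x$-integral on $[0,T]$ into an elliptic integral over $u\in[m^2,1-m^2]$ that reduces via standard tables to a combination of $K(k)$ and $E(k)$. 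The technical heart of the proof is the verification that this combination does not vanish; one route is to evaluate at the soliton limit $m\to 0$ (where the computation collapses to an elementary quadrature) and invoke continuity in $m$ on the admissible range $m\in(0,1/\sqrt 2)$ to propagate non-vanishing across the full parameter range.
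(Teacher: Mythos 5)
Your algebraic treatment of \eqref{ant2} and \eqref{ant3} is correct and genuinely cleaner than the paper's: the identities $L_+[1]=1-6\vp_0^2$, $L_+[\vp_0^2]=-3\vp_0^2+2c$, $L_+[\vp_0]=-4\vp_0^3$, $L_+[\vp_0^3]=6\vp_0^5-8\vp_0^3+6c\vp_0$ all check out, $1-4c\neq 0$ holds on the admissible range $c\in(0,\f{1}{4})$, and the period-integrated profile equation $\int\vp_0=2\int\vp_0^3$ closes both computations. (The paper instead builds the Green's function of $L_+$ from the reduction-of-order second solution and evaluates everything in Jacobi elliptic functions; your route avoids all of that, and for \eqref{ant3} one can even shortcut further: $(1-2\vp_0^2)\vp_0=\vp_0''$, so $\dpr{L_+^{-1}[1]}{\vp_0(\vp_0')^2}$ is a multiple of $\int\vp_0''(\vp_0')^2\,dx=\f{1}{3}\int\p_x[(\vp_0')^3]\,dx=0$.) For \eqref{ant1}, the Vakhitov--Kolokolov reduction $\dpr{L_+^{-1}\vp_0}{\vp_0}=-\f{1}{2}\,\p_\om\|\vp_\om\|^2\big|_{\om=1}$ is a valid reformulation, but you stop exactly where the content of \eqref{ant1} lies: the paper's proof of this item \emph{is} the computation of that quantity as $\frac{E^2(\kappa)-(1-\kappa^2)K^2(\kappa)}{2[2(1-\kappa^2)K(\kappa)-(2-\kappa^2)E(\kappa)]}$ together with a sign check over the whole modulus range. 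Deferring the sign of $\p_\om\|\vp_\om\|^2$ to ``a classical VK calculation'' leaves \eqref{ant1} unproved; you must either carry out the elliptic-integral evaluation or cite a result covering the fixed-period dnoidal family for all admissible moduli.

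The genuine flaw is in your argument for \eqref{ant4}. The reduction of the integrand to $\frac{(2-6u)(-4u^3+3u^2-c)}{(4u^3-5u^2+2u-c)^2}$ with $u=\vp_0^2$ is correct, but the proposed endgame --- evaluate at the soliton limit $m\to 0$ and ``invoke continuity in $m$ to propagate non-vanishing across the full parameter range'' --- is not a valid inference: a continuous function that is nonzero at one end of an interval can vanish in the interior, and nothing you propose rules that out. Worse, the closed form of this integral is $2K(k)-\f{2-k^2}{1-k^2}E(k)$, which tends to $0$ as $k\to 0$ (the constant-wave limit $m\to\f{1}{\sqrt 2}$, in fact vanishing to order $k^4$), so the quantity comes arbitrarily close to zero on the admissible range and any proof must control its sign uniformly in the modulus, not at a single parameter value. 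You need either the explicit elliptic-integral evaluation together with a sign analysis valid for all $k\in(0,1)$ (which is what the paper does, relying on a plot), or a pointwise-sign or monotonicity argument for the reduced $u$-integral; as written, \eqref{ant4} is not established.
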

   \begin{proof}
  For the proof of \eqref{ant1},  recall the basic properties at $h=0$. In this case the equation (\ref{1.2}) has a solution
$$\varphi_0(x)=\alpha dn (\alpha x, \kappa), $$
where $\kappa^2=\frac{2\alpha^2-1}{\alpha^2}$. Also $L_+=-\p_x^2+1-2(\vp_0)^2$. Since $L_+\vp_0'=0$,  the function
      $$\psi(x)=\vp_0'(x)\int^{x}{\frac{1}{\vp_0'^2(s)}}ds, \; \; \left| \begin{array}{cc} \vp_0'& \psi \\ \vp_0'' & \psi'\end{array}\right|=1 $$
      is also solution of $L_+\psi=0$.
      Formally, since $\vp_0'$ has   zeros  using the identities
        $$\frac{1}{cn^2(y,\kappa)}=\frac{1}{dn(y, \kappa)}\frac{\partial}{\partial_y}\frac{sn(x, \kappa)}{cn(y, \kappa)}, \; \;
        \frac{1}{sn^2(y,\kappa)}=-\frac{1}{dn(y, \kappa)}\frac{\partial}{\partial_y}\frac{cn(x, \kappa)}{sn(y, \kappa)}$$
        and integrating by parts we get
       $$
        \psi(x) =\frac{1}{\alpha^2\kappa^2}\left[\frac{1-2sn^2(\alpha x, \kappa )}{dn (\alpha x, \kappa) }
        - \alpha \kappa^2sn(\alpha x, \kappa)cn(\alpha x, \kappa)  \int_{0}^{x}{\frac{1-2sn^2(\alpha s, \kappa )}{dn^2(\alpha s, \kappa)}}ds\right]
       $$
       Thus, we may construct Green function
         $$L_+^{-1}f=\vp_0'\int_{0}^{x}{\psi(s)f(s)}ds-\psi(s)\int_{0}^{x}{\vp_0'(s)f(s)}s+C_f\psi(x), $$
         where $C_f$ is chosen such that $L_{+,0}^{-1}f$ is periodic with same period as $\vp_0(x)$.
         After integrating by parts, we get
           \begin{equation}\label{L1}
             \langle L_+^{-1}\vp_0, \vp_0\rangle = -\langle \vp_0^3, \psi \rangle +\frac{\vp_0^2(T)+\vp_0^2}{2}\langle \vp_0, \psi \rangle +C_{\vp_0}\langle \vp_0, \psi \rangle,
           \end{equation}
   We have
             \begin{equation}\label{L2}
             \begin{array}{ll}
               \langle \vp_0, \psi \rangle =\frac{1}{\alpha^2 \kappa^2}[E(\kappa)-K(\kappa)] \\
               \\
               \langle \vp_0^3, \psi \rangle =\frac{1}{2\kappa^2} [(2-\kappa^2)E(\kappa)-2(1-\kappa^2)K(\kappa)] \\
               \\
               C_{\vp_0}=-\frac{\vp_0''(T)}{2\psi(T)}\langle \vp_0, \psi \rangle+\frac{\vp_0^2(T)-\vp_0^2(0)}{2}.
               \end{array}
             \end{equation}
            With this finally we get
             $$\dpr{L_+^{-1} \vp_0} {\vp_0} = \frac{E^2(\kappa)-(1-\kappa^2)K^2(\kappa)}{2[2(1-\kappa^2)K(\kappa)-(2-\kappa^2)E(\kappa)]}<0.
             $$
             see Figure \ref{fig1} below.
             \begin{figure}[h]
             	\centering
             	\includegraphics[width=8cm,height=6cm]{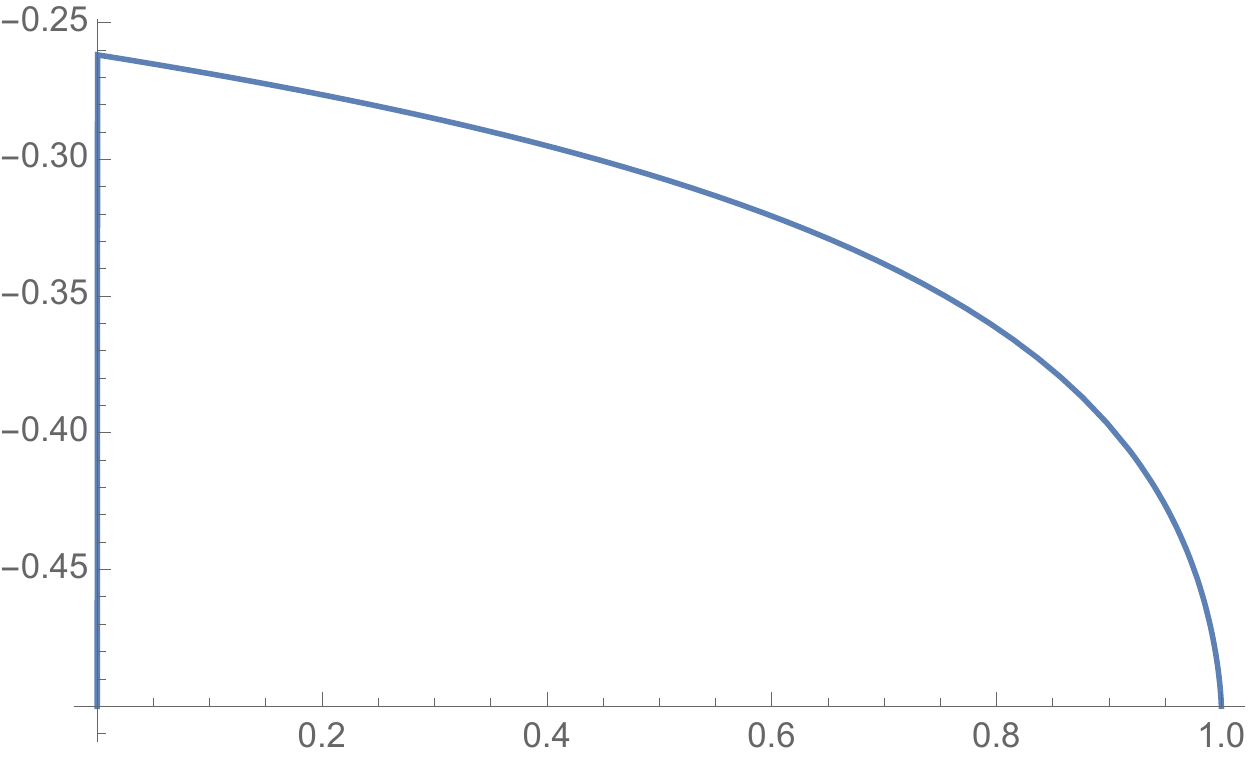}
             	\caption{The function $k\to\frac{E^2(\kappa)-(1-\kappa^2)K^2(\kappa)}{2[2(1-\kappa^2)K(\kappa)-(2-\kappa^2)E(\kappa)]}$ }
             	\label{fig1}
             \end{figure}

              For the proof of \eqref{ant2},  we have
             $$L_+^{-1}[1]=\varphi_0'\int_{0}^{x}\psi (s) ds-\psi (x)\int_{0}^{x}\varphi_0'(s)ds, $$
             and
             $$\dpr{L_+^{-1} [1]} {\vp_0}=\frac{\varphi_0^2(T)}{2}\int_{-T}^{T}{\psi(x)}dx-\frac{3}{2}\int_{-T}^{T}{\psi(x)\varphi_0^2(x)}dx+[C_1+\varphi_0(0)]\int_{-T}^{T}{\psi(x)\varphi_0(x)}dx,$$
             where$$C_1=\varphi_0(T)-\varphi (0)-\frac{\varphi_0''(T)}{\psi'(T)}\int_{0}^{T}{\psi(x)}dx.$$
             Using that $\frac{d}{dx} dn(x)=-\kappa^2sn(x)cn(x)$ and integrating be parts, we get  
             $$\begin{array}{ll}
             \int_{-T}^{T}{\psi(x)\varphi_0(x)}dx & =\frac{2}{\kappa^2} \left[ \frac{2}{3}\int_{0}^{T}{dn(\alpha x)(1-2sn^2(\alpha x))}dx+\frac{1}{3}dn^3(\alpha T)\int_{0}^{T}{\frac{1-2sn^2(\alpha x)}{dn^2(\alpha x)}}dx\right] \\
             \\
              & =\frac{2}{3\kappa^2}dn^3(K)\int_{0}^{T}{\frac{1-2sn^2(\alpha x)}{dn^2(\alpha x)}}dx.
             \end{array} $$
             Similarly, integrating be parts
               $$\int_{-T}^{T}{\psi(x)}dx=\frac{2dn(\alpha T)}{\alpha^2\kappa^2}\int_{0}^{T}{\frac{1-2sn^2(\alpha x)}{dn^2(\alpha x)}}dx.$$
               Thus,
             $$
             \frac{\varphi_0^2(T)}{2}\int_{-T}^{T}{\psi(x)}dx-\frac{3}{2}\int_{-T}^{T}{\psi(x)\varphi_0^2(x)}dx=0.
             $$
             Using that $\varphi_0''(T)=\alpha^3\kappa^2\sqrt{1-\kappa^2}$, and $\psi'(T)=\sqrt{1-\kappa^2}\int_{0}^{T}{\frac{1-2sn^2(\alpha x)}{dn^2(\alpha x)}}dx$, we get
             $$
             C_1+\varphi_0(0)=\varphi_0(T)-\frac{\varphi_0''(T)}{\psi'(T)}\int_{0}^{T}{\psi(x)}=0.
             $$
             For the proof of \eqref{ant3},
            \begin{eqnarray*}
      \dpr{L_+^{-1} [1]} {\vp_0(x)(\varphi_0'(x))^2} &=& \int_{-T}^{T}{\varphi_0(x)(\varphi_0'(x))^3\int_{0}^{x}{\psi(s)}ds}dx-
      \int_{-T}^{T}{\psi(x)\varphi_0^2(x)(\varphi_0'(x))^2}dx\\
        	& + & (C_1+\varphi_0(0))\int_{-T}^{T}{\psi(x)\varphi_0(x)(\varphi_0'(x))^2}dx.
            \end{eqnarray*}

             Again integrating by parts, we get 
             $$\int_{0}^{x}{\psi(s)}ds=\frac{dn(\alpha x)}{\alpha^2 \kappa^2}\int_{0}^{x}{\frac{1-2sn^2(\alpha s)}{dn^2(\alpha s)}}ds,$$
              and
             $$
             \begin{array}{ll}
             \int_{-T}^{T}{\varphi_0(x)(\varphi_0'(x))^3\int_{0}^{x}{\psi(s)}ds}dx-\int_{-T}^{T}{\psi(x)\varphi_0^2(x)(\varphi_0'(x))^2}dx\\
             \\
             =\int_{-T}^{T}{\varphi_0(x)(\varphi_0'(x))^2\left[ \varphi_0'(x)\int_{0}^{x}{\psi(s)}ds-\psi(x)\varphi_0(x)\right]}dx \\
             \\
             =-\alpha^3\kappa^2\int_{0}^{K(\kappa)}{(1-2sn^2(x))sn^2(x)cn^2(x)dn(x)}dx=0.
             \end{array} $$

 Now, using that $\kappa^2=\frac{2\alpha^2-1}{\alpha^2}$, we get
               $$
               \begin{array}{ll}
               \int_0^T \f{(2-6\vp_0^2(y))((\vp_0'(y))^2-(\vp_0''(y))^2)}{((\vp_0'(y))^2+(\vp_0''(y))^2))^2} dy
               =\int_{0}^{K(\kappa)}{\frac{\left(2-\frac{6}{2-\kappa^2}dn^2(y)\right)\left( sn^2(y)cn^2(y)-\frac{1}{2-\kappa^2}[cn^2(y)-sn^2(y)]^2dn^2(y)\right)}{\left[ sn^2(y)cn^2(y)+\frac{1}{2-\kappa^2}[cn^2(y)-sn^2(y)]^2dn^2(y)\right]^2}}dy.
               \end{array}
               $$
               Using Mathematica, we were able to compute this last expression
               $$
               \int_0^T \f{(2-6\vp_0^2(y))((\vp_0'(y))^2-(\vp_0''(y))^2)}{((\vp_0'(y))^2+(\vp_0''(y))^2))^2} dy = 2 K(k) - \f{(2-k^2)}{1-k^2} E(k)<0,
               $$
               as is clear from Figure \ref{fig2} below.
               \begin{figure}[hh]
               	\centering
               	\includegraphics[width=8cm,height=6cm]{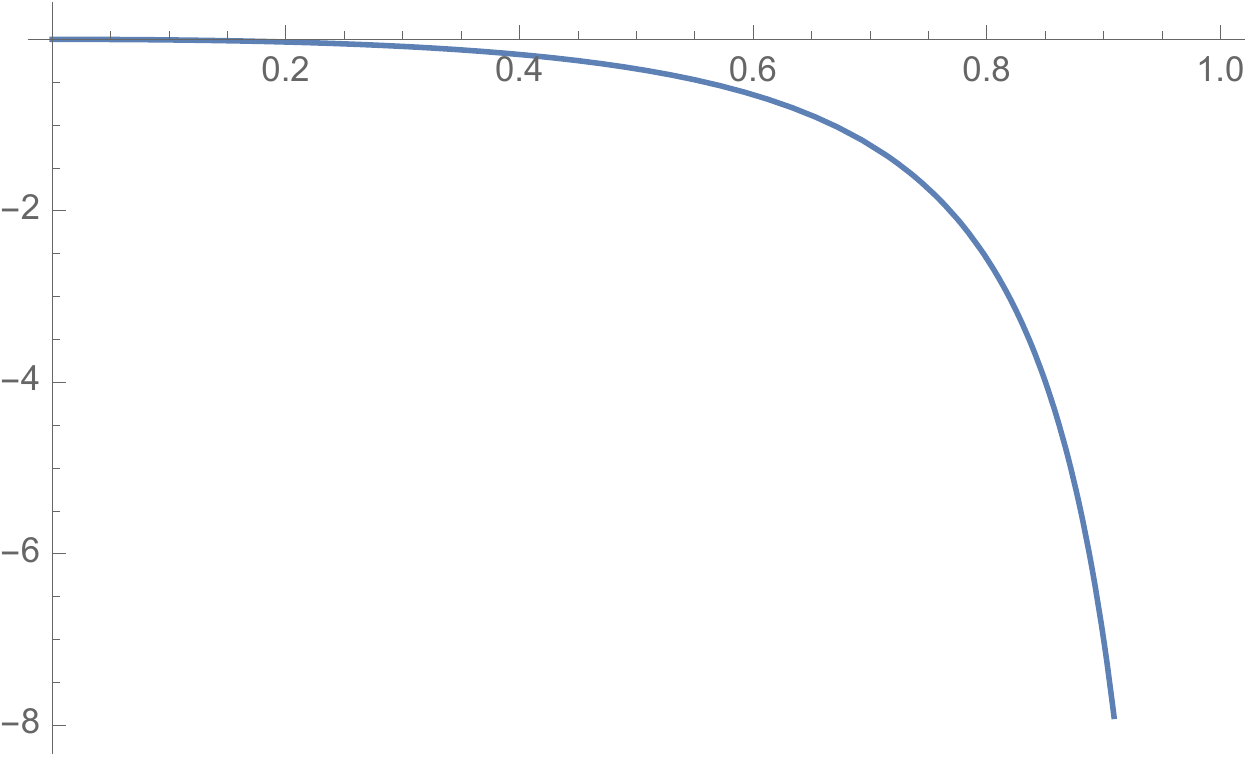}
               	\caption{The function $k \to 2 K(k) - \f{(2-k^2)}{1-k^2} E(k).$ }
              \label{fig2}
              \end{figure}
   \end{proof}


%


\end{document}